\renewcommand{\section}{\@startsection%
{section}%
{1}%
{0em}%
{1.7em}%
{1.2em}%
{\normalfont\large\centering\bfseries}}
\renewcommand{\@seccntformat}[1]%
{\csname the#1\endcsname.\hspace{0.5em}}
\renewcommand{\thesection}{\arabic{section}}
\numberwithin{equation}{section}
\renewcommand\appendix{\par
\setcounter{section}{0}%
\setcounter{subsection}{0}%
\setcounter{theorem}{0}
\setcounter{table}{0}
\setcounter{figure}{0}
\gdef\thetable{\Alph{table}}
\gdef\thefigure{\Alph{figure}}
\section*{Appendix}
\gdef\thesection{\Alph{section}}
\setcounter{section}{1}}
\newtheorem{theorem}{Theorem}[section]
\newtheorem{proposition}[theorem]{Proposition}
\newtheorem{lemma}[theorem]{Lemma}
\theoremstyle{definition}
\newtheorem{definition}{Definition}
\newtheorem{remark}{Remark}
\newcommand{\reals}{\mathbb{R}}
\newcommand{\nats}{\mathbb{N}}
\newcommand{\complex}{\mathbb{C}}
\newcommand{\norm}[1]{\left\|#1\right\|}
\newcommand{\I}{{\rm i}}
\newcommand{\inner}[2]{\left\langle#1,#2\right\rangle}
\newcommand{\cH}{{\cal H}}
\newcommand{\cc}[1]{\overline{#1}}
\newcommand{\eval}[1]{\upharpoonright_{#1}}
\newcommand{\convergesto}[2]{\xrightarrow[#1\to #2]{}}
\DeclareMathOperator{\im}{Im}
\DeclareMathOperator{\dom}{dom}
\DeclareMathOperator{\diag}{diag}
\DeclareMathOperator{\ran}{ran}
\DeclareMathOperator{\clos}{clos}
\DeclareMathOperator*{\slim}{s-lim}
\begin{document}
\title{\sc Functional model for extensions of symmetric operators and
  applications to scattering theory
\footnotetext{%
Mathematics Subject Classification (2010):
47A45 
34L25 
81Q35 
}
\footnotetext{%
Keywords:
Functional model;
Extensions of symmetric operators; Boundary triples;
Inverse scattering problems
}
}
\author{
\textbf{Kirill D. Cherednichenko}
\\
\small Department of Mathematical Sciences\\[-1.1mm]
\small University of Bath\\[-1.1mm]
\small 
Claverton Down, Bath BA2 7AY, U.K.\\[-1.1mm]
\small 
\texttt{K.Cherednichenko@bath.ac.uk}
\\[2mm]
\textbf{Alexander V. Kiselev}
\\
\small 
Institute of Physics and Mathematics,
\\[-1.1mm]
\small 
Dragomanov National Pedagogical University,
\\[-1.1mm]
\small 
9 Pyrohova St, Kyiv, 01601, Ukraine,
\\[-1.1mm]
\small 
\texttt{alexander.v.kiselev@gmail.com}
\\[2mm]
\textbf{Luis O. Silva}
\\
\small 
Departamento de F\'{i}sica Matem\'{a}tica\\[-1.2mm]
\small 
Instituto de Investigaciones en Matem\'aticas Aplicadas y en Sistemas\\[-1.1mm]
\small 
Universidad Nacional Aut\'onoma de M\'exico\\[-1.1mm]
\small 
C.P. 04510, M\'exico D.F.\\[-1.1mm]
\small 
\texttt{silva@iimas.unam.mx}
}
\date{}
\maketitle
\vspace{-4mm}
\begin{center}
  \textsl{To the memory of Professor Boris Pavlov}
\end{center}
  \centerline{{\bf Abstract}} \bigskip    
  On the basis of the explicit formulae for
  the action of the unitary group of exponentials corresponding to
  almost solvable extensions of a given closed symmetric operator with
  equal deficiency indices, we derive a new representation for the scattering matrix for
  pairs of such extensions.  We use this representation to {\it
    explicitly} recover the coupling constants in the inverse
  scattering problem for a finite non-compact quantum graph with
  $\delta$-type vertex conditions.
\section{Introduction}
\label{sec:introduction}
Over the last eighty years or so, the subject of the mathematical
analysis of waves interacting with obstacles and structures
(``scattering theory'') has served as one of the most impressive
examples of bridging abstract mathematics and physics applications,
which in turn motivated the development of new mathematical
techniques. The pioneering works of von Neumann \cite{MR1503053},
\cite{MR0066944} and his contemporaries during 1930--1950, on the mathematical foundations of quantum mechanics, fuelled the interest of
mathematical analysts to formulating and addressing the problems of
direct and inverse wave scattering in a rigorous way.

The foundations of the modern mathematical scattering theory were laid
by Friedrichs, Kato and 
Rosenblum \cite{MR0407617, MR0079235,
  MR0090028, Friedrichs} and subsequently by Birman and Kre\u\i n
\cite{MR0139007},  Birman \cite{Birman_1963}, Kato and Kuroda \cite{MR0385604} and Pearson
\cite{MR0328674}. For a detailed exposition of this subject, see
\cite{MR529429, MR1180965}. A parallel approach, which provides a
connection to the theory of dissipative operators, was developed by
Lax and Phillips \cite{MR0217440}, who analysed the direct scattering
problem for a wide class of linear operators in the Hilbert space,
including those associated with the multi-dimensional acoustic problem
outside an obstacle, using the language of group theory (and, indeed,
thereby developing the semigroup methods in operator theory). The
associated techniques were also termed ``resonance scattering'' by Lax
and Phillips.

By virtue of the underlying dissipative framework, the above activity
set the stage for the applications of non-selfadjoint techniques, such as the functional model for contractions and dissipative
operators by Sz\"{o}kefalvi-Nagy and Foia\c{s} \cite{MR2760647}, which showed the special r\^{o}le in it of the characteristic function of Liv\v{s}ic
\cite{MR0020719} and allowed Pavlov \cite{MR0510053} to construct a
spectral form of the functional model for dissipative operators.  The
connection between this work and the concepts of scattering theory was
uncovered by the famous theorem of Adamyan and Arov
\cite{MR0206711}. In a closely related development, Adamyan and Pavlov \cite{AdamyanPavlov}
established a description for the scattering matrix of a pair of self-adjoint extensions of a symmetric operator (densely or non-densely defined) with finite equal deficiency indices.

Further, Naboko \cite{MR573902} advanced the
research initiated by Pavlov, Adamyan and Arov in two
directions. Firstly, he generalised Pavlov's construction of the functional model in its spectral form 
to the case of non-dissipative operators, and secondly, he established its applicability to the scattering theory 
for pairs of non-selfadjoint operators.
 In particular, he provided
explicit formulae for the wave operators and scattering matrices of a
pair of (in general, non-selfadjoint) operators in the functional
model setting. It is remarkable that in this work of Naboko the
difference between the so-called stationary and non-stationary
scattering approaches disappears.

Our first aim in the present work is to discuss an extension of the approach of Naboko
\cite{MR573902}, which was formulated for additive perturbations of
self-adjoint operators, to the case of \emph{both self-adjoint and non-self-adjoint} extensions of symmetric
operators.  Our strategy is based on a version of the functional
model of Pavlov and Naboko as developed by Ryzhov \cite{MR2330831}.
The work \cite{MR2330831} stopped short of
proving the crucial, from the scattering point of view, theorem on
``smooth'' vectors and therefore was unable to extend Naboko's
results on the scattering theory to the setting of (in general,
non-selfadjoint) extensions of symmetric operators.

Our second aim is, using the above construction, to provide an explicit solution to an open problem of inverse scattering on a finite non-compact quantum graph, namely, the problem of determining matching conditions at the graph vertices. The uniqueness part of this problem has been treated in a preprint by Kostrykin and Schrader \cite{Kostrykin_Schrader}. There is also substantial literature on scattering for vector Schr\"{o}dinger operators on a half-line with matrix potentials, which corresponds to the particular case of a star-graph. Among the latest works on this subject we point out \cite{Weder2015}, \cite{Weder2016}, see also references therein, in which scattering is treated in the case of most general matching conditions at the vertex.

The mentioned problem on quantum graphs is a natural generalisation of
the classical problem of inverse scattering on the
infinite and semi-infinite line, which was solved using the classical
integral-operator techniques by Borg \cite{MR0015185, MR0058063},
Levinson \cite{MR0032067}, Krein \cite{MR0039895, MR0058072,
  MR0078543}, Gel'fand and Levitan \cite{MR0045281}, Marchenko
\cite{MR0075402}, Faddeev \cite{MR0149843, Faddeev_additional},  Deift and Trubowitz
\cite{MR0622619}.
This body of work has also included the
solution to the inverse spectral problem, {\it i.e.} the problem of
determining the potential in the Schr\"{o}dinger equation from the
spectral data. The inverse scattering problem in
these works is reduced to the analysis of the inverse problem based on
the Weyl-Titchmarsh $m$-coefficient, and our analysis below benefits from a reduction of the same kind.

In the operator-theoretic context, the $m$-coefficient is
generalised to both the classical Dirichlet-to-Neumann map (in the PDE
setting), and to the so-called $M$-operator, which takes the form of
the 
Weyl-Titchmarsh $M$-matrix in the case of quantum
graphs and, more generally, symmetric operators with finite deficiency
indices.  This generalisation has been exploited extensively in the study of
operators, self-adjoint and non-selfadjoint alike, through the works of
Krein's school in Ukraine on the theory of boundary triples and the
associated $M$-operators (Gorbachuk and Gorbachuk \cite{MR1294813},
Kochubei \cite{MR0365218, MR0592863}, Derkach and Malamud
\cite{MR1087947}). 
In our view, the theory of boundary triples is convenient for the
study of quantum graphs, when it can also be viewed as a version of
the celebrated Birman-Kre\v\i n-Vi\v{s}ik theory \cite{MR0080271,
  MR0024575, MR0052655}.

Quantum graphs, {\it i.e.} metric graphs with ordinary differential
operators acting on the edges subject to some ``coupling'' conditions
at the graph vertices, see {\it e.g.} \cite{MR3013208} are known to
combine one-dimensional and multidimensional
features. Assuming that the graph topology and the lengths of the
edges are known, for the operator of second differentiation on all
graph edges and $\delta$-type conditions at all graph vertices (see
Section~\ref{sec:quantum-graphs} for precise definitions), in the present paper we determine
the coupling constants at all vertices of a finite graph from the
knowledge of its scattering matrix.  Our approach to the above problem
uses as a starting point the strategy of the work \cite{MR2330831}
mentioned above, which derived the functional model for dissipative
restrictions of ``maximal'' operators, {\it i.e.} the adjoints of
symmetric densely-defined operators with equal deficiency indices. The
functional-model approach allows us to obtain a new formula for the
wave operators for any pair of such restrictions, in terms of the
$M$-operator for an appropriate boundary triple on the graph. This
formula, in turn, implies an expression for the scattering operator
and its spectral representation (``scattering matrix''). The obtained
formula is given explicitly in terms of the coupling constants at the
graph vertices, which allows us to carry out the inverse procedure of
recovering these constants from the knowledge of the scattering
matrix. Our approach is a development of the idea of Ershova {\it et al.}
\cite{MR3484377, MR3430381, MR3404107}, who studied the inverse spectral
problem and the inverse topology problem for quantum graphs using
boundary triples and $M$-operators.

The paper is organised as follows. In Section
\ref{sec:boundary-triples} we recall the key points of the theory of
boundary triples for extensions of symmetric operators with equal
deficiency indices and introduce the associated $M$-operators,
following mainly \cite{MR1087947} and \cite{MR2330831}. In Section \ref{strategy} we 
provide several observations that motivate the strategy of our analysis. 
In Section \ref{sec:functional-model} we recall 
the functional model for the above family of extensions and characterise the absolutely continuous subspace of $A_\varkappa$ as
the closure of the set of ``smooth'' vectors in the model Hilbert
space. On the basis
of this characterisation, in Section~\ref{sec:wave-operators} we
define the wave operators for a pair from the family $\{A_\varkappa\}$
and demonstrate their completeness property. This, in combination with
the functional model, allows us to obtain formulae for the scattering
operator of the pair (\emph{cf.} \cite{MR2386256}). In
Section~\ref{sec:spectral-repr-ac} we describe a convenient
representation of the scattering operator, namely the ``scattering
matrix'', which is explicitly written in terms of the $M$-operator,
analogous to the classical notion of the scattering matrix. All
material up to this point is applicable to a general class of
operators subject to the assumptions discussed in
Section~\ref{sec:boundary-triples}. 
In
Section~\ref{sec:quantum-graphs} we recall the concept of a quantum
graph and discuss the implications of the preceding theory for the
associated scattering operator for the pair $(A_\varkappa, A_0),$
where $\varkappa$ is the parametrising operator as before, now written
in terms of the ``coupling'' constants at the graph vertices and
$A_0=A_\varkappa\vert_{\varkappa=0}$ is the ``unperturbed'' operator
with Kirchhoff vertex conditions. Finally, in
Section~\ref{sec:inverse-scattering} we solve the inverse scattering
problem for a graph with $\delta$-type couplings at the vertices,
using the formulae for the scattering matrix in terms of the
$M$-matrix of the graph.


\section{Extension theory and boundary triples}
\label{sec:boundary-triples}

Let $\cH$ be a separable Hilbert space and denote by
$\inner{\cdot}{\cdot}$ the inner product in this space, which we
consider to be antilinear in the second argument.
Let $A$ be a closed symmetric operator densely defined in $\cH,$ {\it
  i.e.} $A\subset A^*,$ with domain $\dom(A)\subset\cH.$
For such operators, all points in the lower and
upper half-planes are of regular type with deficiency indices
\begin{equation*}
  n_\pm(A):=\dim(\cH\ominus\ran(A-z I))=\dim(\ker(A^*-\cc{z}I))\,,\quad z\in\complex_\pm\,.
\end{equation*}
If $A=A^*$ then $A$ is referred to as self-adjoint.
A closed
operator $L$ is said to be \emph{completely non-selfadjoint} if there
is no subspace reducing $L$ such that the restriction of $L$ to this subspace is self-adjoint. 
In this work we consider extensions of a given closed symmetric
operator $A$ with equal deficiency indices, {\it i.\,e.} $n_-(A)=n_+(A)$,
and use the theory of boundary triples.

In view of the importance of dissipative operators within the present work, 
we briefly recall that a densely defined operator $L$ in $\cH$
is called dissipative if
\begin{equation}
  \label{eq:definition-disipactive}
  \im\inner{Lf}{f}\ge 0\quad \quad\forall f\in\dom(L).
\end{equation}
 For a dissipative operator $L$, the lower half-plane is contained in the set of points of regular type, {\it i.e.}
\begin{equation*}
  \complex_-\subset\{z\in\complex: \exists C>0\ \ \forall f\in\dom(L)\ \ \norm{(L-zI)f}\ge
  C\norm{f}\}\,.
\end{equation*}
A dissipative operator $L$ is called maximal if $\complex_-$ is
actually contained in its resolvent set
$\rho(L):=\{z\in\complex: (L-zI)^{-1}\in\mathcal{B}(\cH)\}$.
($\mathcal{B}(\cH)$ denotes the space of bounded operators defined on
the whole Hilbert space $\cH$). Clearly, a maximal dissipative
operator is closed.

We next describe the boundary triple approach to
the extension theory of symmetric operators with equal deficiency
indices (see in \cite{Derkach} a review of the subject).
This approach is
particularly useful in the study of
self-adjoint extensions of differential operators of second order.


\begin{definition}
  \label{def:boudary-triple}
  For a closed symmetric operator $A$ with equal deficiency indices, consider the linear mappings
    $\Gamma_1:\dom(A^*)\to\mathcal{K},$
$\Gamma_0:\dom(A^*)\to\mathcal{K},$
where $\mathcal{K}$ is an auxiliary separable Hilbert space, such that
\begin{align}
(1)&\quad
  \inner{A^*f}{g}_\cH-\inner{f}{A^*g}_\cH=
\inner{\Gamma_1f}{\Gamma_0g}_\mathcal{K}-\inner{\Gamma_0f}{\Gamma_1g}_\mathcal{K};\label{Green_formula}\\[0.4em]
(2)&\quad \text{The mapping }\dom(A^*)\ni f\mapsto
\left(\begin{matrix}\Gamma_1f\\[0.3em]\Gamma_0f\end{matrix}\right)\in\mathcal{K}\oplus\mathcal{K}\text{ is surjective.}\nonumber
\end{align}
Then the triple $(\mathcal{K},\Gamma_1,\Gamma_0)$ is said to be a \emph{boundary
  triple} for $A^*$.
\end{definition}


In this work we consider \emph{almost solvable} extensions $A_B$ for which there exists a triple $(\mathcal K, \Gamma_1, \Gamma_0)$
and $B\in\mathcal{B}(\mathcal{K})$ 
such that
\begin{equation}
\label{eq:extension-by-operator}
  f\in\dom(A_B) \iff \Gamma_1f=B\Gamma_0f\,.
\end{equation}

The following assertions, written in slightly different terms, can be
found in \cite[Thm.\,2]{MR0365218} and \cite[Chap.\,3
Sec.\,1.4]{MR1154792} (see also \cite[Thm.\,2.3]{MR2732083},
\cite[Thm.\,1.1]{MR2330831}, and \cite[Sec.~14]{MR2953553} for a closer formulation). We compile them in the next
proposition for easy reference.  

\begin{proposition}
  \label{prop:properties-almost-extensions}
   Let $A$ be a closed symmetric operator with equal deficiency
  indices and let $(\mathcal{K},\Gamma_1,\Gamma_0)$ be a the boundary
  triple for $A^*$. Assume that $A_B$ is an almost solvable
  extension. Then the following statements hold:
  \begin{enumerate}
  \item $f\in\dom(A)$ if and only if $\Gamma_1f=\Gamma_0f=0.$
  \item $A_B$ is maximal, i.\,e., $\rho(A_B)\ne\emptyset$.
  \item $A_B^*=A_{B^*}.$
  \item $A_B$ is dissipative if and only if $B$ is dissipative.
  \item $A_B$ is self-adjoint if and only if $B$ is self-adjoint.
  \end{enumerate}
\end{proposition}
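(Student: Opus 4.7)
The plan is to derive items (1), (3), (4) from Green's identity~\eqref{Green_formula} combined with the surjectivity condition of Definition~\ref{def:boudary-triple}(2), to obtain (5) as an immediate corollary of (3), and to treat (2) separately, which will be the only substantive technical step.

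For item (1), if $f \in \dom(A)$ then symmetry of $A$ makes the left-hand side of~\eqref{Green_formula} vanish for every $g \in \dom(A^*)$; the surjectivity of $(\Gamma_1, \Gamma_0)$ then forces $\Gamma_0 f = \Gamma_1 f = 0$. Conversely, vanishing of both boundary values reduces~\eqref{Green_formula} to $\inner{A^* f}{g} = \inner{f}{A^* g}$ for all $g \in \dom(A^*)$, so $f \in \dom(A^{**}) = \dom(A)$. For item (4), evaluating~\eqref{Green_formula} at $g = f$ with $\Gamma_1 f = B \Gamma_0 f$ gives
\begin{equation*}
2 \I \im \inner{A_B f}{f}_\cH = \inner{\Gamma_1 f}{\Gamma_0 f}_\mathcal{K} - \inner{\Gamma_0 f}{\Gamma_1 f}_\mathcal{K} = 2 \I \im \inner{B \Gamma_0 f}{\Gamma_0 f}_\mathcal{K}.
\end{equation*}
One direction is immediate; the converse uses $\Gamma_0(\dom(A_B)) = \mathcal{K}$, itself a consequence of Definition~\ref{def:boudary-triple}(2) applied to the pair $(B\xi, \xi)$ for each $\xi \in \mathcal{K}$, which yields an $f \in \dom(A^*)$ satisfying $\Gamma_1 f = B \Gamma_0 f$, hence $f \in \dom(A_B)$.

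For item (3), the inclusion $A_{B^*} \subset A_B^*$ is a direct substitution into~\eqref{Green_formula}. Conversely, if $g \in \dom(A_B^*)$, then item (1) applied through $\dom(A) \subset \dom(A_B)$ forces $A_B^* g = A^* g$; plugging this into~\eqref{Green_formula} together with $\Gamma_1 f = B \Gamma_0 f$ yields $\inner{\Gamma_0 f}{B^* \Gamma_0 g - \Gamma_1 g}_\mathcal{K} = 0$ for every $f \in \dom(A_B)$, and the surjectivity of $\Gamma_0$ on $\dom(A_B)$ established above gives $\Gamma_1 g = B^* \Gamma_0 g$. Item (5) then follows at once: self-adjointness of $A_B$ amounts to $A_B = A_B^* = A_{B^*}$, and the assignment $B \mapsto A_B$ is injective (for $B_1 \neq B_2$, choose $\xi$ with $B_1 \xi \neq B_2 \xi$ and lift to $f \in \dom(A_{B_1}) \setminus \dom(A_{B_2})$ via surjectivity), so $A_B = A_{B^*}$ is equivalent to $B = B^*$.

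The main obstacle is item (2). The plan is to introduce the Weyl $M$-function associated with the triple, defined by $M(z) \Gamma_0 f_z = \Gamma_1 f_z$ for $f_z \in \ker(A^* - z I)$ and $z \in \complex \setminus \reals$, and to combine it with a Kre\u\i n-type resolvent formula to identify $z \in \rho(A_B)$ with the bounded invertibility of $B - M(z)$ in $\mathcal{K}$. Since $\im M(z)$ is strictly sign-definite on each open half-plane and $\norm{(\im M(z))^{-1}} \to 0$ as $\abs{\im z} \to \infty$, the bounded perturbation $B$ cannot defeat the imaginary part, so $B - M(z)$ is boundedly invertible for $z$ with sufficiently large imaginary part. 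Verifying the Kre\u\i n formula and the precise quantitative bounds on $M(z)$ is the only place where the proof calls on genuine analytic input beyond the algebra of the boundary triple; I would appeal here to \cite{MR0365218, MR2330831}.
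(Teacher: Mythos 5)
The paper offers no proof of this proposition at all: it is explicitly presented as a compilation of known results, with pointers to Kochubei \cite{MR0365218}, Gorbachuk--Gorbachuk \cite{MR1154792}, Ryzhov \cite{MR2330831} and Schm\"udgen \cite{MR2953553}. So the comparison here is really with the arguments in those references. Your treatment of items (1), (3), (4) and (5) is correct and is essentially the standard boundary-triple algebra found there: Green's identity plus surjectivity of $f\mapsto(\Gamma_1 f,\Gamma_0 f)$ gives (1) and the surjectivity of $\Gamma_0$ on $\dom(A_B)$; the latter then yields (4), both inclusions in (3), and (5) via the injectivity of $B\mapsto A_B$. These parts need no revision.

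The genuine gap is in item (2), and it is exactly where you flagged the ``analytic input''. The reduction of $z\in\rho(A_B)$ (for $z\in\rho(A_0)$) to the bounded invertibility of $B-M(z)$ via the Kre\u\i n formula is correct and standard. But the mechanism you propose for producing such a $z$ --- that $\norm{(\im M(z))^{-1}}\to 0$ as $\abs{\im z}\to\infty$, so that a bounded $B$ ``cannot defeat the imaginary part'' --- is false for a general boundary triple. What the axioms give is $\im M(\I y)=y\,\gamma(\I y)^*\gamma(\I y)=\gamma(\I)^*\,y(A_0^2+I)(A_0^2+y^2I)^{-1}\gamma(\I)$, whence only $\im M(\I y)\ge y^{-1}\im M(\I)$; the quantity $\inner{\im M(\I y)u}{u}$ tends to infinity only when $\gamma(\I)u$ lies outside the form domain of $A_0$, and one can easily manufacture deficiency-$(1,1)$ examples (a Herglotz function built from a measure $\sum_n 2^{-n}\delta_{2^n}$, say) in which $\im m(\I y)$ stays bounded, so that $(\im M(\I y))^{-1}$ does not decay at all. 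The estimate does hold in the quantum-graph application of this paper, where $M(\I y)\sim \I\sqrt{\I y}\,D$, but it is not a consequence of the hypotheses of the proposition, and the cited references do not prove maximality this way.

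Two further remarks. First, your argument does close item (2) in every case this paper actually uses: if $B$ is dissipative (respectively, anti-dissipative or self-adjoint), then for $z=-\I$ one has $\im\bigl(B-M(-\I)\bigr)\ge\im M(\I)=\gamma(\I)^*\gamma(\I)\ge\varepsilon I$ with $\varepsilon>0$ (since $\gamma(\I)$ is a topological isomorphism onto the deficiency subspace), so $B-M(-\I)$ is boundedly invertible and $-\I\in\rho(A_B)$; no large-$\abs{\im z}$ asymptotics are needed. Second, for a general bounded, non-dissipative, non-accumulative $B$ the existence of a point where $B-M(z)$ is invertible is precisely the nontrivial content of Kochubei's theorem, and it is obtained there from the Cayley-transform parametrisation of proper extensions rather than from a norm bound on $\im M$; if you want a self-contained proof you would need to import that argument rather than the decay claim.
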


\begin{definition}
  \label{def:weyl-function}
  The function $M:\complex_-\cup\complex_+\to \mathcal{B}(\cH)$ such
  that
  \begin{equation*}
    M(z)\Gamma_0f=\Gamma_1f\ \ \ \ \ \forall f\in\ker(A^*-zI)
  \end{equation*}
is \emph{the Weyl function of the boundary triple}
  $(\mathcal{K},\Gamma_1,\Gamma_0)$ for $A^*,$ where $A$ is assumed to be as in Proposition \ref{prop:properties-almost-extensions}.
 \end{definition} 

The Weyl function defined above has the following properties \cite{MR1087947}.
\begin{proposition}
  \label{prop:weyl-function-properties}
  Let $M$ be a Weyl function of the boundary triple
  $(\mathcal{K},\Gamma_1,\Gamma_0)$ for $A^*,$ where $A$ is a closed symmetric operator with equal deficiency indices. Then the following statements hold:
  \begin{enumerate}
  \item $M:\complex\setminus\reals\to\mathcal{B}(\mathcal{K})$\,.
  \item $M$ is a $\mathcal{B}(\mathcal{K})$-valued double-sided
    $\mathcal{R}$-function \cite{MR0328627KK}, that is,
    \begin{equation*}
      M(z)^*=M(\cc{z})\quad\text{ and }\quad\im(z)\im\bigl(M(z)\bigr)>0\quad\text{ for }
      z\in\complex\setminus\reals\,.
    \end{equation*}
  \item The spectrum of $A_B$ coincides with the set of points $z_0\in{\mathbb C}$ such
    that $(M-B)^{-1}$
    does not admit analytic continuation into $z_0.$
  \end{enumerate}
\end{proposition}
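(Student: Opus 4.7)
The plan is to establish parts (1)--(3) in turn, using Green's formula \eqref{Green_formula} together with the surjectivity hypothesis from Definition \ref{def:boudary-triple} and the von Neumann decomposition
\[
\dom(A^*)=\dom(A)\dotplus\ker(A^*-zI)\dotplus\ker(A^*-\cc{z}I),\qquad z\in\complex\setminus\reals,
\]
available because every non-real point is of regular type for a closed symmetric operator. The auxiliary object at the heart of the argument is the ``gamma field'' $\gamma(z):=\bigl(\Gamma_0\eval{\ker(A^*-zI)}\bigr)^{-1}$, whose construction has to precede everything else.

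For (1), I would first show that $\Gamma_0\eval{\ker(A^*-zI)}$ is a bijection onto $\mathcal{K}$. Injectivity is immediate from Green's formula with $f=g\in\ker(A^*-zI)$ and $\Gamma_0 f=0$: the right-hand side vanishes and the left-hand side equals $2\I\,\im(z)\|f\|^2$, forcing $f=0$. Surjectivity follows from the surjectivity of $(\Gamma_1,\Gamma_0)$ on $\dom(A^*)$, using that $\Gamma_0=\Gamma_1=0$ on $\dom(A)$ (Proposition \ref{prop:properties-almost-extensions}(1)) together with the decomposition above. Then $\gamma(z):\mathcal{K}\to\cH$ is bounded by the closed graph theorem, since $\ker(A^*-zI)$ is closed in $\cH$, the graph norm of $A^*$ agrees there with the $\cH$-norm, and $\Gamma_0$ is continuous in the graph norm. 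Consequently $M(z)=\Gamma_1\gamma(z)\in\mathcal{B}(\mathcal{K})$ as a composition of bounded operators.

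For (2), inserting $f\in\ker(A^*-zI)$ and $g\in\ker(A^*-\cc{z}I)$ into Green's formula annihilates the left-hand side (since $z\inner{f}{g}-z\inner{f}{g}=0$), whence
\[
\inner{M(z)\Gamma_0 f}{\Gamma_0 g}_{\mathcal{K}}=\inner{\Gamma_0 f}{M(\cc{z})\Gamma_0 g}_{\mathcal{K}},
\]
and varying the arguments over $\mathcal{K}$ using surjectivity of $\Gamma_0$ on each defect subspace yields $M(\cc{z})=M(z)^*$. For the strict imaginary-part inequality, Green's formula with $g=f\in\ker(A^*-zI)$ produces $\im\inner{M(z)\Gamma_0 f}{\Gamma_0 f}=\im(z)\|f\|^2$; combining this with the lower bound $\|f\|\ge c\|\Gamma_0 f\|$ supplied by the boundedness of $\gamma(z)^{-1}$ delivers $\im(z)\,\im(M(z))>0$ as a strict operator inequality.

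For (3), the plan is to produce a Krein-type resolvent formula. Fix the reference self-adjoint extension $A_\infty$ defined by $\Gamma_0 f=0$ (formally the ``$B=\infty$'' case, self-adjoint by Proposition \ref{prop:properties-almost-extensions}(5)). For $z\in\rho(A_\infty)\cap(\complex\setminus\reals)$ and $h\in\cH$, any $f\in\dom(A_B)$ solving $(A_B-zI)f=h$ admits the representation $f=(A_\infty-zI)^{-1}h+\gamma(z)\phi$ with $\phi\in\mathcal{K}$ uniquely determined, and the boundary condition $\Gamma_1 f=B\Gamma_0 f$ reduces to $(M(z)-B)\phi=-\Gamma_1(A_\infty-zI)^{-1}h$. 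Thus $z\in\rho(A_B)$ precisely when $(M(z)-B)^{-1}\in\mathcal{B}(\mathcal{K})$, and one obtains
\[
(A_B-zI)^{-1}=(A_\infty-zI)^{-1}-\gamma(z)(M(z)-B)^{-1}\Gamma_1(A_\infty-zI)^{-1}.
\]
Since $\gamma(z)$, $\Gamma_1(A_\infty-zI)^{-1}$, and $(A_\infty-zI)^{-1}$ are analytic wherever $M$ is, this identifies $\spec(A_B)$ with the set of $z$ at which $(M-B)^{-1}$ fails to admit analytic continuation. I expect the principal obstacle to be the passage across the real axis, where one must carefully track simultaneous analytic continuations of $M$, $\gamma$, and $(A_\infty-zI)^{-1}$ in order to handle points $z_0\in\reals$ lying either in $\rho(A_\infty)$ or in its complement.
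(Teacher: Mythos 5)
The paper gives no proof of this proposition at all---it is imported from Derkach--Malamud \cite{MR1087947}---so your attempt can only be measured against the standard argument of that literature, which is indeed the route you take (gamma field, Green's identity, Kre\u\i n resolvent formula). Two steps, however, do not hold up as written. The surjectivity of $\Gamma_0$ restricted to $\ker(A^*-zI)$ does not follow from the von Neumann decomposition $\dom(A^*)=\dom(A)\dotplus\ker(A^*-zI)\dotplus\ker(A^*-\cc{z}I)$: decomposing $g=g_0+g_++g_-$ only yields $\Gamma_0 g=\Gamma_0 g_++\Gamma_0 g_-$, and nothing forces a prescribed boundary value to be realised by the $\ker(A^*-zI)$ component alone. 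What is actually needed is the decomposition $\dom(A^*)=\ker\Gamma_0\dotplus\ker(A^*-zI)$, valid for $z$ in the resolvent set of $A_\infty:=A^*\eval{\ker\Gamma_0}$; and the self-adjointness of $A_\infty$, which you attribute to Proposition \ref{prop:properties-almost-extensions}(5), is not covered by that proposition (it concerns almost solvable extensions with \emph{bounded} parameter $B$, and $\ker\Gamma_0$ is not of that form). It must be proved separately: $A_\infty$ is symmetric by \eqref{Green_formula}, and $\dom(A_\infty^*)\subset\ker\Gamma_0$ because $\Gamma_1$ maps $\ker\Gamma_0$ onto all of $\mathcal{K}$ by the surjectivity hypothesis of Definition \ref{def:boudary-triple}. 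Once this is supplied, your parts (1) and (2) are sound.

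Part (3) is genuinely incomplete. Your Kre\u\i n-formula computation establishes the equivalence $z\in\rho(A_B)\Longleftrightarrow(M(z)-B)^{-1}\in\mathcal{B}(\mathcal{K})$ only for $z\in\rho(A_\infty)$, in particular for non-real $z$; the statement, however, concerns arbitrary $z_0\in\complex$, and its entire point is the behaviour at real $z_0$, where $M$ itself need not be defined. You flag the ``passage across the real axis'' as the principal obstacle and then stop. To close the argument one must prove both implications at real points: that for $z_0\in\rho(A_B)$ the function $(M-B)^{-1}$ admits analytic continuation into $z_0$ (obtained by solving the Kre\u\i n formula for $(B-M(z))^{-1}$ in terms of $(A_B-z)^{-1}$, $\gamma(z)$ and $\gamma(\cc{z})^*$, all of which continue analytically into $\rho(A_B)\cap\rho(A_\infty)$, with the exceptional points of $\spec(A_\infty)\cap\rho(A_B)$ treated by a separate argument), and conversely that analytic continuability of $(M-B)^{-1}$ into $z_0$ forces $z_0\in\rho(A_B)$. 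As it stands, your proof delivers only the off-axis version of item (3).
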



We next lay out the notation for some of the main objects in our analysis.  
In the auxiliary Hilbert space $\mathcal{K}$, choose
  a bounded positive self-adjoint operator $\alpha$ so
  that the operator
 \begin{equation}
    \label{eq:b-kappa-def}
    B_\varkappa:=\frac{\alpha\varkappa\alpha}{2}
  \end{equation}
 belongs to $\mathcal{B}(\mathcal{K})$, where $\varkappa$ is a bounded operator\footnote{Clearly, the assumption that ${\rm ker}(\alpha)=\{0\}$ is without loss of generality, by a suitable modification of $\varkappa$ if necessary.} in $\mathcal{K}.$
In what follows, 
we deal with almost solvable extensions of a given
symmetric operator $A$ that are generated by $B_\varkappa$ via (\ref{eq:extension-by-operator}).
It is always assumed that the deficiency indices of $A$ are equal and that some boundary triple
$(\mathcal{K},\Gamma_1,\Gamma_0)$ for
$A^*$ is fixed.
In order to streamline the
 formulae, we write
 \begin{equation}
   \label{eq:a-kappa-def}
   A_\varkappa:=A_{B_\varkappa}.
 \end{equation}
Here $\varkappa$ should be understood as a parameter for a family of
almost solvable extensions of $A$. Note that if $\varkappa$ is
self-adjoint then so is $B_\varkappa$ and, hence by
Proposition~\ref{prop:properties-almost-extensions}(5), $A_\varkappa$ is
self-adjoint. Note also that $A_{\I I}$ is maximal dissipative, again by
Proposition~\ref{prop:properties-almost-extensions}.

\begin{definition}
  \label{def:charasteristic-function}
  The characteristic function of the operator $A_{\I I}$ is
  the operator-valued function $S$ on $\complex_+$ given by
  \begin{equation}
    S(z):=I+\I\alpha\bigl(B_{\I I}^*-M(z)\bigr)^{-1}\alpha,\ \ \ \ \ z\in\complex_+.
    \label{S_definition}
  \end{equation}
 \end{definition}
 \begin{remark}
  \label{rem:def-charasteristic-function}
  The function $S$ is analytic in $\complex_+$ and, for each
  $z\in\complex_+$, the mapping $S(z):{\mathcal K}\to {\mathcal K}$ is a contraction. Therefore, $S$ has
  nontangential limits almost everywhere on the real line in the
  strong topology \cite{MR2760647},
  which we henceforth denote by $S(k),$ $k\in{\mathbb R}.$
\end{remark}
\begin{remark}
When
  $\alpha=\sqrt{2}I,$ which is the case of our application to finite quantum graphs, a straightforward calculation yields that
  $S(z)$ is the Cayley transform of $M(z)$, {\it i.e.}
 \begin{equation}
   S(z)=\bigl(M(z)-\I I\bigr)\bigl(M(z)+\I I\bigr)^{-1}\,.
   \label{SM_formula}
 \end{equation}
\end{remark}

\section{General remarks on our approach}
\label{strategy}

Our approach to mathematical scattering theory for extensions of closed symmetric operators (direct and inverse) will be based 
on the 
functional model for a family of almost solvable extensions of the given minimal symmetric operator. Our choice of this method 
is based on the following considerations:

1) We would like to consider scattering problems where at least one of the two operators of the pair is non-selfadjoint. In contrast, the classical scattering results only pertain to pairs of self-adjoint operators: even the definition of $\exp({\rm i}Lt)$ in the case of non-selfadjoint $L$ needs to be clarified. While there are various ways to construct functional calculus for non-uniformly bounded groups, say the Riesz-Dunford calculus, the most attractive of them for us 
is via developing a functional model where the exponent is represented by an operator of multiplication on a linear set dense in the absolutely continuous subspace of its generator. 
 The ``symmetric Pavlov model'' \cite{MR0510053}, which we describe in Section \ref{sec:functional-model},  provides such an approach.




2) Naboko \cite{MR573902} has shown how to construct mathematical scattering for a class of non-selfadjoint operators in the ``additive'' case $L=A+{\rm i}V,$ where $A,$ $V$ are self-adjoint operators. 
This kind of method offers some advantages in comparison with other techniques: a) the difference between stationary and non-stationary theories disappears, in the sense that the same construction yields explicit expressions for both; b) the scattering operator is represented in a concise form, which, in particular, immediately yields a formula for the spectral representation in the eigenfunction basis of the unperturbed self-adjoint operator (``scattering matrix"). 


3)   It has to be pointed out that the seemingly non-selfadjoint approach due to Naboko contains the self-adjoint setting as its particular case, and when applied this way it yields all the classical results
 ({\it e.g.} Pearson Theorem, Birman-Krein-Kuroda Theorem, as well as their generalisations).
 In this self-adjoint setting this approach proves to be consistent with the ``smooth" scattering theory (see \cite{MR1180965}). As in the case of the latter, the principal r\^{o}le in Naboko's construction is played by a linear dense subset of the absolutely continuous subspace (``smooth'' vectors), which in the self-adjoint case is described by the so-called Rosenblum Lemma \cite{MR0090028}. In the non-self-adjoint case the corresponding linear dense subset is identified by the property that the resolvent acts on it as the resolvent of the operator of multiplication in the symmetric Pavlov representation, {\it cf.} (\ref{New_Representation}). This, in turn, facilitates the derivation of explicit formulae for wave operators on these dense sets of smooth vectors. The construction of the wave operators is then completed by passing to a closure.

 In what follows we briefly describe the approach introduced above and the results obtained on this way, essentially building up on the earlier results pertaining to the analysis of non-self-adjoint extensions, due to Ryzhov. These allow us to generalise Naboko's construction of wave operators and scattering matrices to the case studied in the present paper.  In order to deal with the family of extensions $\{A_\varkappa\}$ of
the operator $A$ (where the parameter $\varkappa$ is itself an operator, see notation
immediately following Proposition \ref{prop:weyl-function-properties}), we first construct a functional model of its
particular dissipative extension. This is done following the
Pavlov-Naboko procedure, which in turn stems from Sz.-Nagy-Foia\c{s}
functional model. This allows us to obtain a simple model for the
whole family $\{A_\varkappa\},$ in particular yielding a possibility
to apply it to the scattering theory for certain pairs of operators in
$\{A_\varkappa\}$, including both the cases when these operators are
self-adjoint and non-selfadjoint.
 In view of transparency, we try to reduce the technicalities to the bare minimum, at the same time pointing out that the corresponding complete proofs of the necessary statements can be found in \cite{ChKS_OTAA}.




\section{Functional model}
\label{sec:functional-model}

Following \cite{MR573902}, we introduce a Hilbert space
serving as a functional model for the family of operators $A_\varkappa$. 
 This functional model was constructed for completely non-selfadjoint maximal
dissipative operators in \cite{MR0510053, MR0365199, Drogobych} and further
developed in \cite{MR573902}. Next we recall some related necessary
information. In what follows, in various formulae, we use the
subscript ``$\pm$'' to indicate two different versions of the same formula in which the subscripts ``$+$'' and ``$-$'' are taken individually.

A ${\mathcal K}$-valued function $f,$ analytic on $\complex_\pm,$  is said to be
in the Hardy class $H^2_\pm({\mathcal K})$ if ({\it cf.} \cite[Sec.\,4.8]{MR822228})
\begin{equation*}
  \sup_{y>0}\int_\reals\bigl\Vert f(x\pm\I y)\bigr\Vert_{\mathcal K}^2dx<+\infty.
\end{equation*} 
Whenever $f\in H^2_\pm({\mathcal K})$, the left-hand side of the above inequality defines $\norm{f}_{H^2_\pm({\mathcal K})}^2$. 
We use the notation $H^2_+$ and $H^2_-$ for the usual Hardy spaces of ${\mathbb C}$-valued functions. 
Any element in the Hardy spaces $H^2_\pm({\mathcal K})$ can be associated with
its boundary values in the topology of ${\mathcal K},$ which exist almost
everywhere on the real line. The spaces of boundary functions of
$H_\pm^2({\mathcal K})$ are denoted by $\widehat{H}_\pm^2({\mathcal K}),$ and they are subspaces
of $L^2(\reals,{\mathcal K})$ \cite[Sec.\,4.8, Thm.\,B]{MR822228}).  By the
Paley-Wiener theorem \cite[Sec.\,4.8, Thm.\,E]{MR822228}), these subspaces are the orthogonal complements of each
other ({\it i.e.}, $L^2(\reals,{\mathcal K})=\widehat{H}^2_+({\mathcal K})\oplus \widehat{H}^2_-({\mathcal K})$).


As mentioned above, 
the characteristic function $S$ 
has non-tangential limits
almost everywhere on the real line in the strong topology. Thus, for a
two-component vector function $\binom{\widetilde{g}}{g}$ taking values
in ${\mathcal K}\oplus {\mathcal K}$, one can consider the integral
\begin{equation}
  \label{eq:inner-in-functional}
  \int_\reals\inner{\begin{pmatrix} I & S^*(s)\\
    S(s) &
    I\end{pmatrix}\binom{\widetilde{g}(s)}{g(s)}}{\binom{\widetilde{g}(s)}{g(s)}}_{{\mathcal K}\oplus
{\mathcal K}}ds,
\end{equation}
which is always nonnegative, due to the contractive properties of $S.$
The space
\begin{equation}
\label{mathfrakH}
\mathfrak{H}:=L^2\Biggl({\mathcal K}\oplus {\mathcal K}; \begin{pmatrix}
    I & S^*\\
    S & I
  \end{pmatrix}\Biggr)
\end{equation}
is the completion of the linear set of two-component vector functions
$\binom{\widetilde{g}}{g}: {\mathbb R}\to {\mathcal K}\oplus {\mathcal K}$ in the norm
(\ref{eq:inner-in-functional}), factored with respect to vectors of zero norm.
Naturally, not every element of the set can be identified with a pair
$\binom{\widetilde{g}}{g}$ of two independent functions. Still, in
what follows we keep the notation $\binom{\widetilde{g}}{g}$ for the
elements of this space.

  Another consequence of the contractive properties of the
  characteristic function $S$ is that for $\widetilde{g},g\in
  L^2(\reals,{\mathcal K})$ one has
  \begin{equation*}
    \norm{\binom{\widetilde{g}}{g}}_\mathfrak{H}\ge
    \begin{cases}
      \norm{\widetilde{g}+S^*g}_{L^2(\reals,{\mathcal K})},\\[0.4em]
      \norm{S\widetilde{g}+g}_{L^2(\reals,{\mathcal K})}\,.
    \end{cases}
  \end{equation*}
  Thus, for every Cauchy sequence
  $\{\binom{\widetilde{g}_n}{g_n}\}_{n=1}^\infty$, with respect to the $\mathfrak{H}$-topology,
  such that $\widetilde{g}_n,g_n\in L^2(\reals,{\mathcal K})$ for all
  $n\in\nats$, the limits of $\widetilde{g}_n+S^*g_n$ and
  $S\widetilde{g}_n+g_n$ exists in $L^2(\reals,{\mathcal K})$, so that the
  objects $\widetilde{g}+S^*g$ and
  $S\widetilde{g}+g$ can always be treated as $L^2(\reals,{\mathcal K})$ functions.

Furthermore, consider the orthogonal subspaces of $\mathfrak{H}$
\begin{equation}
  \label{eq:D-spaces}
  D_-:=
  \begin{pmatrix}
    0\\[0.3em]
    \widehat{H}^2_-({\mathcal K})
  \end{pmatrix}\,,\quad
 D_+:=
  \begin{pmatrix}
   \widehat{H}^2_+({\mathcal K})\\[0.3em]
   0
  \end{pmatrix},
\end{equation}
and define the space
$K:=\mathfrak{H}\ominus(D_-\oplus D_+),$
which is characterised as follows, see {\it e.g.} \cite{MR0365199, Drogobych}:
\begin{equation}
  K=\left\{\begin{pmatrix}
   \widetilde{g}\\
   g
  \end{pmatrix}\in\mathfrak{H}: \widetilde{g}+S^*g\in  \widehat{H}^2_-({\mathcal K})\,,
  S\widetilde{g}+g\in
 \widehat{H}^2_+({\mathcal K})\right\}\,.
 \label{characterise_K}
\end{equation}
The orthogonal projection $P_K$ onto the subspace
$K$ is given by (see {\it e.g.} \cite{MR0500225})
\begin{equation}
\label{eq:pk-action}
  P_K
  \begin{pmatrix}
    \widetilde{g}\\
    g
  \end{pmatrix}
=
\begin{pmatrix}
  \widetilde{g}-P_+(\widetilde{g}+S^*g)\\[0.3em]
  g-P_-(S\,\widetilde{g}+g)
\end{pmatrix}\,,
\end{equation}
where $P_\pm$ are the orthogonal Riesz projections in $L^2({\mathcal K})$ onto
$\widehat{H}^2_\pm({\mathcal K})$.

A completely
non-selfadjoint dissipative operator admits \cite{MR2760647} a
self-adjoint dilation. 
The dilation $\mathscr{A}=\mathscr{A}^*$ of the operator $A_{\I I}$ is constructed following Pavlov's
procedure \cite{MR0365199, MR0510053, Drogobych}:  it is defined in the Hilbert space
  $\mathscr{H}=
  L^2(\reals_-,{\mathcal K})\oplus\mathcal{H}\oplus L^2(\reals_+,{\mathcal K}),
  $
so that
\begin{equation*}
 P_\mathcal{H}(\mathscr{A}-zI)^{-1}\eval{\mathcal{H}}=
 (A_{\I I}-z I)^{-1}\,, \qquad z\in\complex_-.
\end{equation*}
As in the case of additive non-selfadjoint perturbations
\cite{MR573902}, Ryzhov established in
\cite[Thm.\,2.3]{MR2330831} that $\mathfrak{H}$ serves as the
functional model for the dilation $\mathscr{A}$ {\it i.e.} there exists
an isometry $\Phi: \mathscr{H}\to\mathfrak{H}$ such that
$\mathscr{A}$ is transformed into the operator of multiplication by
the independent variable: 
 $ \Phi(\mathscr{A}-z I)^{-1}=(\cdot-z)^{-1}\Phi\,.$
Furthermore, under this isometry 
  $$
  \Phi\eval\cH\cH=K
  $$
   unitarily, where $\cH$ is understood as being embedded in $\mathscr{H}$ in the natural way, {\it i.e.} 
   $$
   \cH\ni h\mapsto 0\oplus h\oplus0\in\mathscr{H}.
   $$
  In what follows we keep the label $\Phi$ for the restriction $\Phi\eval\cH,$ in hope that it does not lead to confusion.


Following the ideas of Naboko, in the functional model space $\mathfrak{H}$
consider two subspaces 
\begin{equation}
   \mathfrak{N}^\varkappa_\pm:=\left\{\binom{\widetilde{g}}{g}\in\mathfrak{H}:
  P_\pm\left(\chi_\varkappa^+(\widetilde{g}+S^*g)+\chi_\varkappa^-(S\widetilde{g}
  +g)\right)=0\right\},
  \label{definition_curlyN}
\end{equation}
where
\begin{equation*}
  \chi_\varkappa^\pm:=\frac{I\pm\I\varkappa}{2}.
\end{equation*}
These subspaces have a characterisation in terms of the resolvent of the operator $A_\varkappa,$ whose proof, see \cite{ChKS_OTAA}, follows the approach of \cite[Thm.\,4]{MR573902}.
\begin{theorem}[\cite{ChKS_OTAA}]
  \label{lem:similar-to-naboko-thm-4}
The following characterisation holds:
\begin{equation}
 \mathfrak{N}^\varkappa_\pm=\left\{\binom{\widetilde{g}}{g}\in\mathfrak{H}:
  \Phi(A_{\varkappa}-z I)^{-1}\Phi^*P_K\binom{\widetilde{g}}{g}=
P_K\frac{1}{\cdot-z}\binom{\widetilde{g}}{g}
\text{ for all } z\in\complex_\pm\right\}\,.
\label{curlyN_plusminus}
\end{equation}
\end{theorem}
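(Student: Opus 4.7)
The plan is to adapt the strategy of \cite[Thm.\,4]{MR573902} to the present extension-theoretic setting, making essential use of Ryzhov's realisation of the functional model \cite{MR2330831}. The point of departure is that the construction of the Pavlov dilation $\mathscr{A}$ of $A_{\I I}$ already yields the desired identity in the special case $\varkappa=\I I$ for $z\in\complex_-$: the isometry $\Phi$ then intertwines $(A_{\I I}-zI)^{-1}$ with the compression of multiplication by $(\cdot-z)^{-1}$ to $K$. An analogous statement for $(A_{-\I I}-zI)^{-1}=(A_{\I I}^*-zI)^{-1}$ covers $z\in\complex_+$ with $\varkappa=-\I I$. Hence the task is to propagate the identity from these two extreme choices to an arbitrary bounded $\varkappa$.

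The propagation mechanism is Krein's resolvent formula, which in boundary-triple language reads
$$
(A_\varkappa-zI)^{-1}-(A_{\I I}-zI)^{-1}=-\gamma(z)\bigl(B_\varkappa-M(z)\bigr)^{-1}(B_\varkappa-B_{\I I})\bigl(B_{\I I}-M(z)\bigr)^{-1}\gamma(\bar z)^*,
$$
where $\gamma(z)=(\Gamma_0\eval{\ker(A^*-zI)})^{-1}$ is the Krein $\gamma$-field. The crucial step is to transport both sides through $\Phi$, evaluated on the model vector $P_K\binom{\widetilde g}{g}$. Using Ryzhov's explicit formulae for $\Phi^*\eval{K}$ in terms of the boundary maps, together with the Cayley-type identity \eqref{S_definition} relating $S$ and $M$ and the representation \eqref{eq:b-kappa-def} of $B_\varkappa$, one should be able to factorise the transported correction as the product of an analytically invertible $\mathcal{B}(\mathcal{K})$-valued prefactor on $\complex_+$ with the holomorphic extension into $\complex_+$ of the Hardy-space trace
$$
P_+\bigl(\chi_\varkappa^+(\widetilde g+S^*g)+\chi_\varkappa^-(S\widetilde g+g)\bigr).
$$
The two factors $\chi_\varkappa^\pm$ arise because the correction contains two occurrences of $(B_{\I I}-M(z))^{-1}$, which through \eqref{S_definition} produce an $S$ and an $S^*$ factor respectively, while the perturbation $B_\varkappa-B_{\I I}$ carries the $\pm\I$ splitting.

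Granted this identification, both inclusions follow mechanically. If $\binom{\widetilde g}{g}\in\mathfrak{N}^\varkappa_+$, then the Hardy-space trace vanishes, the correction annihilates $P_K\binom{\widetilde g}{g}$, and the identity for $A_\varkappa$ reduces to the known one for $A_{\I I}$. Conversely, if the model identity holds for every $z\in\complex_+$, invertibility of the prefactor together with analyticity forces the trace to vanish. The characterisation of $\mathfrak{N}^\varkappa_-$ for $z\in\complex_-$ is the symmetric mirror, with the roles of $A_{\I I}$ and $A_{-\I I}$ interchanged and $P_+$ replaced by $P_-$. The principal obstacle I anticipate is precisely the explicit calculation of the transported correction term --- keeping the Cayley-transform algebra consistent, recognising the exact combination $\chi_\varkappa^+(\widetilde g+S^*g)+\chi_\varkappa^-(S\widetilde g+g)$ in the outcome, and verifying that the prefactor is genuinely invertible on the open half-plane so that the equivalence runs in both directions. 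This is the bookkeeping that the authors relegate to \cite{ChKS_OTAA}; once it is in place the argument follows the template of Naboko's Theorem~4 with only cosmetic modifications.
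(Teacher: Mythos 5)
Your overall route --- take the model identity for the extreme extensions $A_{\I I}$ (for $z\in\complex_-$) and $A_{-\I I}=A_{\I I}^*$ (for $z\in\complex_+$) as the base case, propagate to general $\varkappa$ via Krein's resolvent formula, and recognise the transported correction as an analytically invertible prefactor times the half-plane continuation of $P_\pm\bigl(\chi_\varkappa^+(\widetilde g+S^*g)+\chi_\varkappa^-(S\widetilde g+g)\bigr)$ --- is essentially the route the paper takes: the proof deferred to \cite{ChKS_OTAA} follows Naboko's Theorem 4, with the second resolvent identity replaced by Krein's formula and with Ryzhov's boundary-map formulae supplying the expression for $\gamma(\bar z)^*\Phi^*P_K$.

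There is, however, a concrete flaw in your final assembly. What the dilation actually gives for $z\in\complex_-$ is $\Phi(A_{\I I}-zI)^{-1}\Phi^*P_K\binom{\widetilde g}{g}=P_K\frac{1}{\cdot-z}P_K\binom{\widetilde g}{g}$, with an \emph{inner} $P_K$; this is not the right-hand side of (\ref{curlyN_plusminus}). By (\ref{eq:pk-action}) the discrepancy equals $P_K\frac{1}{\cdot-z}\binom{0}{P_-(S\widetilde g+g)}$ (the $D_+$ component drops out because $(\cdot-z)^{-1}\widehat{H}^2_+({\mathcal K})\subset\widehat{H}^2_+({\mathcal K})$ for $z\in\complex_-$), and it is controlled by $P_-(S\widetilde g+g)$ \emph{alone}. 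For general $\varkappa$, membership in $\mathfrak{N}^\varkappa_-$ does not force $P_-(S\widetilde g+g)=0$, so your concluding step --- ``the correction annihilates $P_K\binom{\widetilde g}{g}$ and the identity for $A_\varkappa$ reduces to the known one for $A_{\I I}$'' --- does not go through: on $\mathfrak{N}^\varkappa_\pm$ the two deviations (the $(I-P_K)$ term above and the transported Krein correction) are separately nonzero and must first be \emph{merged} into a single term proportional to the value at $z$ of the analytic continuation of the full combination $\chi_\varkappa^+(\widetilde g+S^*g)+\chi_\varkappa^-(S\widetilde g+g)$; only then is the defining condition of $\mathfrak{N}^\varkappa_\pm$ visible. (For $\varkappa=\I I$ one has $\chi_{\I I}^+=0$, $\chi_{\I I}^-=I$, the Krein correction is absent, and the combination degenerates to $S\widetilde g+g$, which is why the base case is consistent --- but it is already an instance of the correction-term analysis rather than something obtained ``for free''.) Once the merging is done, both inclusions follow from the evaluation trick $h/(\cdot-z)=\bigl(h-h(z)\bigr)/(\cdot-z)+h(z)/(\cdot-z)$ for $h\in\widehat{H}^2_\mp({\mathcal K})$; for the converse direction you must additionally check that $P_K\frac{1}{\cdot-z}$ does not annihilate the ${\mathcal K}$-vector-valued prefactor, since otherwise the vanishing of the correction for all $z$ would not force the continuation, and hence the Hardy-space trace, to vanish.
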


Consider also the counterparts of $\mathfrak{N}^\varkappa_\pm$ in the original
Hilbert space $\cH:$
\begin{equation}
\label{eq:definition-n_pm}
  \widetilde{N}_\pm^\varkappa:=\Phi^*P_K\mathfrak{N}^\varkappa_\pm\,,
\end{equation}
which are linear sets albeit not necessarily subspaces.
In a way similar to \cite{MR573902}, we define the set
\begin{equation*}
  \widetilde{N}_{\rm e}^\varkappa:=\widetilde{N}_+^\varkappa\cap
 \widetilde{N}_-^\varkappa
\end{equation*}
of so-called smooth vectors and its closure $N_{\rm e}^\varkappa:=\clos(\widetilde{N}_{\rm e}^\varkappa).$ 
This proves to be suitable for the description of
the absolutely continuous subspace and, therefore, for the construction
of the wave operators. 


\begin{definition}
\label{abs_cont_subspace}
For a symmetric operator $A,$ in the case of a non-selfadjoint extension $A_\varkappa$ the absolutely continuous subspace $\cH_{\rm ac}(A_{\varkappa})$  is defined by the formula $\cH_{\rm ac}(A_{\varkappa})=N^\varkappa_{\rm e}.$
\end{definition}
The next statement, the proof of which is given in \cite{ChKS_OTAA}, 
motivates the above definition.

\begin{theorem}[Self-adjoint case, see \cite{ChKS_OTAA}]
  \label{thm:on-smooth-vectors-a.c.equality}
  Assume that $\varkappa=\varkappa^*$  (equivalently, $A_\varkappa=A_\varkappa^*$)
  and let  $\alpha\Gamma_0(A_{\varkappa}-z I)^{-1}$ be a Hilbert-Schmidt
  operator for at least one point $z\in\rho(A_\varkappa)$. If
  $A$ is completely non-selfadjoint, then
    $N_{\rm e}^\varkappa = \cH_{\rm ac}(A_{\varkappa}).$
\end{theorem}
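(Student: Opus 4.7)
The plan is to establish the two inclusions $N^\varkappa_{\rm e}\subset\cH_{\rm ac}(A_\varkappa)$ and $\cH_{\rm ac}(A_\varkappa)\subset N^\varkappa_{\rm e}$ separately, following the strategy of Naboko \cite[Thm.\,4]{MR573902} adapted to the extension-theoretic setting via the isometry $\Phi$ of Ryzhov's functional model. Since $\varkappa=\varkappa^*$, the operator $A_\varkappa$ is self-adjoint, so $\cH_{\rm ac}(A_\varkappa)$ is meaningful in the usual spectral-theorem sense, and the theorem really asserts the coincidence of a structural object with a classical one.

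For the forward inclusion, let $u\in\widetilde{N}^\varkappa_{\rm e}=\widetilde{N}^\varkappa_+\cap\widetilde{N}^\varkappa_-$; there exist $\binom{\widetilde g_\pm}{g_\pm}\in\mathfrak{N}^\varkappa_\pm$ with $u=\Phi^*P_K\binom{\widetilde g_\pm}{g_\pm}$. By Theorem \ref{lem:similar-to-naboko-thm-4},
\[
\Phi(A_\varkappa-zI)^{-1}u = P_K\frac{1}{\cdot-z}\binom{\widetilde g_\pm}{g_\pm},\qquad z\in\complex_\pm.
\]
Inserting this into $\inner{(A_\varkappa-zI)^{-1}u}{u}_\cH$ and exploiting that $\Phi$ is isometric together with the $L^2$-type inner product \eqref{eq:inner-in-functional} on $\mathfrak{H}$, the scalar function $z\mapsto\inner{(A_\varkappa-zI)^{-1}u}{u}_\cH$ becomes a Cauchy integral of an $L^1(\reals)$ density built from $\binom{\widetilde g_\pm}{g_\pm}$ and the boundary values of $S$. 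Stone's formula combined with Fatou's theorem then yields absolute continuity of the spectral measure of $A_\varkappa$ at $u$, so $u\in\cH_{\rm ac}(A_\varkappa)$; the closure is automatic since $\cH_{\rm ac}(A_\varkappa)$ is closed.

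For the reverse inclusion, the strategy is to exhibit a family of smooth vectors dense in $\cH_{\rm ac}(A_\varkappa)$. I would consider the explicit family $u_{\phi,z_0}:=\bigl[(A_\varkappa-z_0 I)^{-1}-(A_\varkappa-\overline{z_0}I)^{-1}\bigr]\Gamma_0^*\alpha\phi$ indexed by $\phi\in\mathcal{K}$ and $z_0\in\complex_+$. A direct computation using a Krein-type resolvent formula for $(A_\varkappa-zI)^{-1}$ in terms of $\Gamma_0$, $M(z)$ and $B_\varkappa$, combined with the definition \eqref{definition_curlyN} of $\mathfrak{N}^\varkappa_\pm$, verifies that each $u_{\phi,z_0}$ lies in $\widetilde{N}^\varkappa_{\rm e}$. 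The Hilbert-Schmidt hypothesis on $\alpha\Gamma_0(A_\varkappa-z_0 I)^{-1}$ propagates via resolvent identities to all $z\in\rho(A_\varkappa)$ and provides the trace-class control needed to express the spectral projection onto $\cH_{\rm ac}(A_\varkappa)$ as a weak boundary integral of operators built from the family $\{u_{\phi,z_0}\}$. Complete non-self-adjointness of $A$ then enters to guarantee minimality of the dilation $\mathscr{A}$, ensuring that no portion of $\cH_{\rm ac}(A_\varkappa)$ is lost under $\Phi$ and, correspondingly, that the linear span of the $u_{\phi,z_0}$ is total in $\cH_{\rm ac}(A_\varkappa)$.

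The principal obstacle is the density step in the reverse inclusion: one must reconcile the boundary behaviour of the $\mathcal{B}(\mathcal{K})$-valued Weyl function $M$ on $\reals$ with the Hilbert-Schmidt hypothesis to produce, via Krein-type trace formulae, an $L^2_{\rm loc}$ representation of the absolutely continuous spectral measure of $A_\varkappa$ in terms of the family above. This is where the two hypotheses of the theorem are both essential, and it is exactly the delicate step worked out in detail in \cite{ChKS_OTAA}.
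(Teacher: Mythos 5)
The paper itself gives no proof of this theorem, deferring entirely to \cite{ChKS_OTAA}, so the benchmark is the Naboko-type argument that reference follows. Your forward inclusion $N^\varkappa_{\rm e}\subset\cH_{\rm ac}(A_\varkappa)$ is essentially sound, with two caveats. First, you allow different representatives $\binom{\widetilde g_+}{g_+}$ and $\binom{\widetilde g_-}{g_-}$ in the two half-planes; to get a single Cauchy-type representation you should invoke \eqref{New_Representation}, which provides one representative valid on all of $\complex_-\cup\complex_+$. Second, the density $\bigl\langle W(s)\binom{\widetilde g}{g}(s),(P_K\binom{\widetilde g}{g})(s)\bigr\rangle$ arising from the sesquilinear form is complex-valued, and Fatou-type inversion for the Cauchy transform of a complex $L^1$ function does not by itself exclude a singular part carried by a Lebesgue-null set. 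The clean route is through the quadratic form: $\varepsilon\|(A_\varkappa-x-\I\varepsilon)^{-1}u\|^2$ equals $\pi$ times the Poisson extension of $\mu_u$ and is dominated, via Theorem \ref{lem:similar-to-naboko-thm-4} and $\|P_K\cdot\|\le\|\cdot\|$, by the Poisson extension of the \emph{nonnegative} $L^1$ density $\bigl\langle W(s)\binom{\widetilde g}{g}(s),\binom{\widetilde g}{g}(s)\bigr\rangle$, whence $\mu_u((a,b))\le\int_a^b w_0\,ds$ for every interval and $\mu_u\ll ds$.

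The reverse inclusion, which is where both hypotheses enter and where the actual content of the theorem lies, has a genuine gap. (a) The vector $\Gamma_0^*\alpha\phi$ is not well defined: $\Gamma_0$ is a map from $\dom(A^*)$ to $\mathcal K$ that is not closable as an operator in $\cH$, so it has no adjoint acting from $\mathcal K$ into $\cH$; at best you could use $[\alpha\Gamma_0(A_\varkappa-\overline{z_0}I)^{-1}]^*\phi$, which is bounded precisely because of the hypothesis. (b) Even for a corrected family, membership in $\widetilde N^\varkappa_{\rm e}$ is not a ``direct computation'': by \eqref{definition_curlyN} it amounts to verifying that $\chi^+_\varkappa(\widetilde g+S^*g)+\chi^-_\varkappa(S\widetilde g+g)$ has vanishing Riesz projections onto both Hardy classes, i.e.\ to genuine $H^2_\pm(\mathcal K)$ statements. (c) Most seriously, totality of your family in $\cH_{\rm ac}(A_\varkappa)$ is asserted, not proved: complete non-selfadjointness of $A$ yields density in $\cH$ of the span of the deficiency subspaces $\ker(A^*-zI)$ (and minimality of the dilation of $A_{\I I}$), but it does not deliver density of your specific resolvent-difference vectors inside the absolutely continuous subspace -- and your closing sentence concedes that this is exactly the step left to \cite{ChKS_OTAA}. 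The standard argument runs in the opposite direction: by Rosenblum's lemma \cite{MR0090028}, vectors $u\in\cH_{\rm ac}(A_\varkappa)$ whose spectral density $d\langle E_\lambda u,u\rangle/d\lambda$ is bounded and compactly supported are dense in $\cH_{\rm ac}(A_\varkappa)$, and for each such $u$ one shows directly that $\alpha\Gamma_0(A_\varkappa-zI)^{-1}u$ belongs to $H^2_\pm(\mathcal K)$ -- this is where the Hilbert--Schmidt hypothesis is used -- so that $u$ satisfies the defining conditions of $\widetilde N^\varkappa_\pm$. Without carrying out (b) and (c), the inclusion $\cH_{\rm ac}(A_\varkappa)\subset N^\varkappa_{\rm e}$ is not established.
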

Definition \ref{abs_cont_subspace} follows in the footsteps of the corresponding definition by Naboko \cite{MR573902} in the case of additive perturbations. In particular, an argument similar to \cite[Corollary 1]{MR573902} shows that for the functional model image of $\widetilde{N}^\varkappa_{\rm e}$ the following representation holds:
\begin{align}
&\Phi\widetilde{N}^\varkappa_{\rm e}=\biggl\{P_K\binom{\widetilde{g}}{g}:\nonumber\\
&\binom{\widetilde{g}}{g}\in\mathfrak{H}\ {\rm satisfies}\ \Phi(A_{\varkappa}-z I)^{-1}\Phi^*P_K\binom{\widetilde{g}}{g}=
P_K\frac{1}{\cdot-z}\binom{\widetilde{g}}{g}\ \ \ \forall\,z\in{\mathbb C}_-\cup{\mathbb C}_+\biggr\}.
\label{New_Representation}
\end{align}
(Note that the inclusion of the right-hand side of (\ref{New_Representation}) into $\Phi\widetilde{N}^\varkappa_{\rm e}$ follows immediately from Theorem \ref{lem:similar-to-naboko-thm-4}.) Further, we arrive at an equivalent description ({\it cf.} (\ref{definition_curlyN})):
  \begin{equation}
    \Phi \widetilde{N}_{\rm e}^\varkappa=\left\{P_K\binom{\widetilde{g}}{g}: \binom{\widetilde{g}}{g}\in\mathfrak{H}
    \ {\rm satisfies}\ 
 \chi_\varkappa^+(\widetilde{g}+S^*g)+\chi_\varkappa^-(S\widetilde{g}+g)=0\right\}\,.
 \label{lyubimaya_formula}
  \end{equation}

The representations (\ref{New_Representation}), (\ref{lyubimaya_formula}) illustrate the r\^{o}le of the 
subspace of smooth vectors as the subspace in whose image under the isometry $\Phi$ the 
operator $A_\varkappa$ acts as multiplication by the independent variable. This property is crucial in the derivation of the formulae for wave operators of pairs from the family $\{A_\varkappa\},$ which we present in the next section and which are subsequently used in the solution of the inverse problem for quantum graphs in Sections \ref{sec:quantum-graphs}, \ref{sec:inverse-scattering}.

\section{Wave and scattering operators}
\label{sec:wave-operators}
The results discussed above allow us to calculate the wave
operators for any pair $A_{\varkappa_1},A_{\varkappa_2}$, where
$A_{\varkappa_1}$ and $A_{\varkappa_2}$ are operators in the class
introduced in Section~\ref{sec:boundary-triples}. 
For simplicity, and bearing in mind the
application of the abstract construction to the problem described in
Sections \ref{sec:quantum-graphs} and \ref{sec:inverse-scattering}, in what follows we set $\varkappa_2=0$ and write
$\varkappa$ instead of $\varkappa_1$. Note that $A_0$ is a self-adjoint
operator, which is convenient for presentation purposes.

We begin 
by recalling the model representation for the function
$\exp(iA_\varkappa t)$, $t\in\reals$, of the operator $A_\varkappa,$ evaluated on the set of smooth vectors 
$\widetilde{N}_{\rm e}^\varkappa,$ as well as a proposition describing such vectors in
 $\widetilde{N}_{\rm e}^\varkappa$ and $\widetilde{N}_{\rm e}^0$ that the difference between their respective dynamics vanishes as $t\to-\infty.$
\begin{proposition}(\cite[Prop.\,2]{MR573902})
  \label{prop:exp-funtion}
  For all $t\in\reals$ and all $\binom{\widetilde{g}}{g}$ such that
  $\Phi^*P_K\binom{\widetilde{g}}{g}\in\widetilde{N}_{\rm e}^\varkappa$ one has
  \begin{equation*}
    \Phi\exp(iA_\varkappa t)\Phi^*
  P_K\binom{\widetilde{g}}{g}=
P_K\exp(ikt)\binom{\widetilde{g}}{g}.
  \end{equation*}
\end{proposition}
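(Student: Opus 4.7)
The plan is to follow Naboko's original Prop.~2 in \cite{MR573902}, adapted to our extension-theoretic setting, and to derive the desired identity directly from the resolvent characterisation of smooth vectors supplied by Theorem~\ref{lem:similar-to-naboko-thm-4}. The underlying principle is that, on the smooth-vector set $\widetilde{N}_{\rm e}^\varkappa$, the operator $A_\varkappa$ is conjugated via $\Phi$ to multiplication by the independent variable in the weighted $L^2$-space~\eqref{mathfrakH}; applying any reasonable functional calculus should therefore produce the corresponding multiplication operator, here multiplication by $e^{ikt}$.

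Concretely, the assumption $\Phi^*P_K\binom{\widetilde{g}}{g}\in\widetilde{N}_{\rm e}^\varkappa=\widetilde{N}_+^\varkappa\cap\widetilde{N}_-^\varkappa$ combined with Theorem~\ref{lem:similar-to-naboko-thm-4} yields
\[
\Phi(A_\varkappa-zI)^{-1}\Phi^*P_K\binom{\widetilde{g}}{g}=P_K\frac{1}{\cdot-z}\binom{\widetilde{g}}{g}\qquad\forall\,z\in\complex\setminus\reals.
\]
For $t\geq 0$, one can then represent $\exp(\I A_\varkappa t)$ via a Dunford--Riesz contour integral along some $\Gamma\subset\complex_+$ (chosen so that $e^{\I zt}$ is bounded on $\Gamma$ and $\spec(A_\varkappa)$ is appropriately enclosed), substitute the identity above, pull the bounded $z$-independent projection $P_K$ out of the integral, and evaluate
\[
\frac{1}{2\pi\I}\int_\Gamma\frac{e^{\I zt}}{z-k}\,dz=e^{\I kt}
\]
by an elementary residue calculation, arriving at $P_K e^{\I kt}\binom{\widetilde{g}}{g}$ as required. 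The case $t\leq 0$ is handled symmetrically, using membership in $\widetilde{N}_-^\varkappa$ and a contour in $\complex_-$.

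The main obstacle is the rigorous justification of this Dunford--Riesz representation for the (in general) non-self-adjoint, unbounded generator $A_\varkappa$, together with the interchange of contour integration with the bounded maps $\Phi$, $P_K$ in the $\mathfrak{H}$-norm. The cleanest way to sidestep this issue is a Laplace-transform argument: for $\lambda$ in a suitable right half-plane, the transform $\int_0^\infty e^{-\lambda t}\Phi\exp(\I A_\varkappa t)\Phi^*P_K\binom{\widetilde{g}}{g}\,dt$ equals, up to a linear change of spectral variable, the $\Phi$-conjugate of a resolvent of $A_\varkappa$; by the resolvent identity displayed above this coincides with $\int_0^\infty e^{-\lambda t}P_K e^{\I kt}\binom{\widetilde{g}}{g}\,dt$. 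Uniqueness of the vector-valued Laplace transform on continuous, exponentially bounded curves then gives the claim for $t\geq 0$, with the analogous argument based on $\widetilde{N}_-^\varkappa$ covering $t\leq 0$. A subsidiary verification, immediate from the isometric action of multiplication by $e^{\I kt}$ on the weighted space~\eqref{mathfrakH}, is that $t\mapsto P_K e^{\I kt}\binom{\widetilde{g}}{g}$ is a well-defined, strongly continuous $\mathfrak{H}$-valued curve.
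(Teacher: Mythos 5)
The paper offers no proof of this proposition at all --- it is quoted directly from Naboko \cite[Prop.\,2]{MR573902} --- so the comparison here is really with the argument your sketch would need to supply from scratch. Your central idea, namely that the resolvent characterisation of Theorem \ref{lem:similar-to-naboko-thm-4} forces the exponential to act as multiplication by $e^{\I kt}$ via uniqueness of vector-valued Laplace transforms, is the right mechanism, and the computation matching $\int_0^\infty e^{-\lambda t}e^{\I kt}\,dt$ with the multiplier $(k-z)^{-1}$ at $z=-\I\lambda$ is correct.

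There is, however, a genuine gap at the foundation. Both of your routes presuppose that $\exp(\I A_\varkappa t)$ is an already well-defined, strongly continuous and exponentially bounded two-sided evolution whose Laplace transform is $(\lambda-\I A_\varkappa)^{-1}$ (equivalently, which admits a Dunford--Riesz representation). For non-self-adjoint $A_\varkappa$ --- the case this proposition exists to cover, cf.\ item 1) of Section \ref{strategy} --- none of this is available: the boundary perturbation $B_\varkappa$ does not control $\im\inner{A_\varkappa f}{f}$ by $\norm{f}^2$, so $\I A_\varkappa$ need not be a quasi-dissipative generator, and even when $A_\varkappa$ is maximal dissipative the backward evolution ($t<0$) does not exist as a family of bounded operators on $\cH$. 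The model formula is precisely how the exponential is \emph{given meaning} on $\widetilde N_{\rm e}^\varkappa$, so deriving it from Hille--Yosida or Dunford--Riesz for $A_\varkappa$ is circular unless you first supply an independent rigorous definition together with generation and growth bounds. There is also a concrete half-plane error: for $t\ge0$ the relevant resolvent points are $z=-\I\lambda\in\complex_-$, so the contour must lie in $\complex_-$ (closing upwards to capture the residue at the real point $k$) and the half of \eqref{curlyN_plusminus} being used is membership in $\widetilde N_-^\varkappa$, not $\widetilde N_+^\varkappa$; a closed contour $\Gamma\subset\complex_+$ encloses no real spectrum and your residue computation returns $0$, not $e^{\I kt}$ (harmless only because smooth vectors happen to lie in both $\widetilde N_\pm^\varkappa$, but symptomatic). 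A non-circular argument should instead start from the object that is unambiguously defined --- the unitary group of the self-adjoint dilation $\mathscr A$, which $\Phi$ carries to multiplication by $e^{\I kt}$ on all of $\mathfrak H$ --- and use smoothness of the vector to show that its compression to $K$ agrees with the (suitably defined) dynamics generated by $A_\varkappa$ for both signs of $t$.
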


\begin{proposition}(\cite[Section 4]{MR573902})
  \label{prop:pre-wave-op}
  If $\Phi^*P_K\binom{\widetilde{g}}{g}\in\widetilde{N}_{\rm e}^\varkappa$
  and $\Phi^*P_K\binom{\widetilde{g}'}{g}\in\widetilde{N}_{\rm e}^0$
  (with the same element\footnote{Despite the fact that
    $\binom{\widetilde{g}}{g}\in\mathfrak{H}$ is nothing but a symbol,
    still $\widetilde{g}$ and $g$ can be identified with vectors in
    certain $L^2({\mathcal K})$ spaces with  operators ``weights'', see
    details below in Section~\ref{sec:spectral-repr-ac}. Further, we recall that even then
    for $\binom{\widetilde{g}}{g}\in\mathfrak{H}$, the components $\widetilde{g}$ and $g$ are not, in general, \emph{independent} of each other.} $g$), then
\begin{equation*}
  \norm{\exp(-iA_\varkappa t)\Phi^*P_K\binom{\widetilde{g}}{g}-\exp(-iA_0
  t)\Phi^* P_K\binom{\widetilde{g}'}{g}}_{\mathfrak H}\convergesto{t}{-\infty}0.
\end{equation*}
\end{proposition}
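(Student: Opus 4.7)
The plan is to reduce the statement, via the explicit model action of the semigroup on smooth vectors, to a Hardy-type decay estimate in $\mathfrak{H}$. Proposition~\ref{prop:exp-funtion} applied to each of the two evolutions gives
\[
e^{-iA_\varkappa t}\Phi^*P_K\binom{\widetilde{g}}{g}-e^{-iA_0 t}\Phi^*P_K\binom{\widetilde{g}'}{g}=\Phi^*P_K e^{-ikt}\binom{\widetilde{g}-\widetilde{g}'}{0},
\]
and since $\Phi^*\eval{K}\colon K\to\cH$ is isometric, it suffices to prove that $\|P_K e^{-ikt}\binom{h}{0}\|_{\mathfrak{H}}\convergesto{t}{-\infty}0$, where $h:=\widetilde{g}-\widetilde{g}'$.

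The key preliminary is to make sense of $h$ as a genuine $L^2(\reals,{\mathcal K})$ function. Although, as stressed in the footnote to Proposition~\ref{prop:pre-wave-op} and in the discussion following \eqref{mathfrakH}, the individual components of a vector in $\mathfrak{H}$ are not in $L^2(\reals,{\mathcal K})$, the combinations $u:=\widetilde{g}+S^*g$ and $u':=\widetilde{g}'+S^*g$ do belong to $L^2(\reals,{\mathcal K})$ by construction. Because the second components of the two vectors coincide, the $S^*g$ term cancels and $h=u-u'\in L^2(\reals,{\mathcal K})$, with the definition being independent of the choice of Cauchy-sequence representatives (the latter being where the argument needs the most care).

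Once $h$ is identified as an $L^2$-function, the projection formula \eqref{eq:pk-action} gives
\[
P_K e^{-ikt}\binom{h}{0}=\binom{P_-e^{-ikt}h}{-P_-e^{-ikt}Sh}.
\]
A Plancherel computation, using the substitution $\eta=\xi+t$ on the Fourier side, yields for every $f\in L^2(\reals,{\mathcal K})$
\[
\|P_- e^{-ikt}f\|_{L^2(\reals,{\mathcal K})}^2=\int_{-\infty}^t\|\widehat{f}(\eta)\|_{\mathcal K}^2\,d\eta\convergesto{t}{-\infty}0.
\]
Applied to $f=h$ and to $f=Sh$ (the latter being in $L^2(\reals,{\mathcal K})$ because $S$ acts as a bounded multiplier), this gives vanishing of both entries of the projected pair in $L^2(\reals,{\mathcal K})$. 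The elementary inequality $\|\binom{a}{b}\|_{\mathfrak{H}}^2\le 2(\|a\|_{L^2}^2+\|b\|_{L^2}^2)$, valid since $\|S(k)\|\le 1$ for almost every $k$, transfers this $L^2$-decay to the required $\mathfrak{H}$-decay and completes the argument.

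The main obstacle is the second step: the identification of the formal object ``$\widetilde{g}-\widetilde{g}'$'' as a bona fide $L^2(\reals,{\mathcal K})$ function. Rigorously, this requires descending to approximating Cauchy sequences in $\mathfrak{H}$, exploiting that one may choose the second components to be literally identical (as guaranteed by the hypothesis ``with the same element $g$''), and invoking the $L^2$-convergence of $\widetilde{g}_n+S^*g_n$ ensured by the discussion around \eqref{eq:inner-in-functional}. Once this identification is carried out, the remainder is a routine Hardy-space shift calculation combined with the contractivity of the characteristic function $S$.
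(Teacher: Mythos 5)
Your argument is correct and is essentially the same as the one the paper relies on: the paper gives no proof of this proposition but cites \cite[Section 4]{MR573902}, where the statement is established by exactly this route — reduce via the model representation of the exponential to $P_Ke^{-\I kt}\binom{h}{0}$ with $h=\widetilde g-\widetilde g'=(\widetilde g+S^*g)-(\widetilde g'+S^*g)\in L^2(\reals,\mathcal K)$, then apply the projection formula and the Paley--Wiener shift estimate together with contractivity of $S$. You also correctly isolate the only delicate point, namely that $h$ and $Sh$ are genuine $L^2(\reals,\mathcal K)$ functions even though the individual components of elements of $\mathfrak H$ are only symbols.
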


It follows from Proposition \ref{prop:pre-wave-op}  that whenever $\Phi^*P_K\binom{\widetilde{g}}{g}\in\widetilde{N}_{\rm e}^\varkappa$
and $\Phi^*P_K\binom{\widetilde{g}'}{g}\in\widetilde{N}_{\rm e}^0$ (with the same second component $g$), one formally has
\begin{equation*}
\lim_{t\to-\infty}e^{iA_0t}e^{-iA_\varkappa t}\Phi^*P_K\binom{\widetilde{g}}{g}=\Phi^*P_K\binom{\widetilde{g}'}{g}
=\Phi^*P_K\binom{-(I+S)^{-1}(I+S^*)g}{g}\,,
\end{equation*}
where in the last equality we use the inclusion $\Phi^*P_K\binom{\widetilde{g}'}{g}\in\widetilde{N}_{\rm e}^0,$ which by (\ref{lyubimaya_formula}) yields $\widetilde{g}'+S^*g+S\widetilde{g}'+g=0.$

In what follows we use the standard definition of wave operators,
see {\it e.g.} \cite{MR0407617}, allowing the operator $A_\varkappa$ to be non-selfadjoint:
\begin{equation}
  W_\pm(A_0, A_\varkappa):=\slim_{t\to\pm\infty}e^{iA_0t}e^{-iA_\varkappa t}P_{\rm ac}^\varkappa,\qquad 
  W_\pm(A_\varkappa, A_0):=\slim_{t\to\pm\infty}e^{iA_\varkappa t}e^{-iA_0 t}P_{\rm ac}^0.
\label{wave_def}
\end{equation}
In the above formulae, 
we denote by  $P_{\rm ac}^\varkappa,$ $P_{\rm ac}^0$ the projections onto the absolutely continuous subspace of $A_\varkappa,$ see Definition \ref{abs_cont_subspace}, and the absolutely continuous subspace of the self-adjoint operator $A_0,$ respectively.

It follows that for $\Phi^*P_K\binom{\widetilde{g}}{g}\in\widetilde{N}_{\rm e}^\varkappa$ one has
\begin{equation}
  \label{eq:formula-0-kappaw-}
  W_-(A_0,A_\varkappa)\Phi^*P_K\binom{\widetilde{g}}{g}=\Phi^*P_K\binom{-(I+S)^{-1}(I+S^*)g}{g}\,.
\end{equation}
One argues in a similar way in the case of the wave operator $W_+(A_0,A_\varkappa),$ as well as in the case of the wave operators 
$W_\pm(A_\varkappa,A_0),$ which we  
define by
\begin{equation*}
  \norm{e^{-iA_\varkappa
      t}W_\pm(A_\varkappa,A_0)\Phi^*P_K\binom{\widetilde{g}}{g}-e^{-iA_0t}\Phi^*P_K\binom{\widetilde{g}}{g}
}_{\mathfrak H}\convergesto{t}{\pm\infty}0,\qquad \Phi^*P_K\binom{\widetilde{g}}{g}\in\widetilde{N}_{\rm e}^0.
\end{equation*}

\begin{theorem}[\cite{ChKS_OTAA}]
  \label{thm:existence-completeness-wave-operators}
  Let $A$ be a closed, symmetric, completely non-selfadjoint operator
  with equal deficiency indices and consider its extension $A_\varkappa,$ as described in Section \ref{sec:boundary-triples}, under the assumption that $A_\varkappa$ has at least one regular point in ${\mathbb C}_+$ and in ${\mathbb C}_-.$ 
  If $S-I$ is compact 
  in $\overline{\mathbb C}_+,$  
   then the wave operators
  $W_\pm(A_0,A_\varkappa)$ exist on dense sets in $N_{\rm e}^\varkappa$ and for all $\Phi^*P_K\binom{\widetilde{g}}{g}\in\widetilde{N}_{\rm e}^\varkappa$ one has (\ref{eq:formula-0-kappaw-}) and 
  \begin{equation}
W_+(A_0,A_\varkappa)\Phi^*P_K\binom{\widetilde{g}}{g}
=\Phi^*P_K\left(\begin{matrix}\widetilde{g}\\[0.3em]-(I+S^*)^{-1}(I+S)\widetilde{g}\end{matrix}\right).
\label{eq:formula-0-kappaw+}
\end{equation}
Similarly, the wave operators and $W_\pm(A_\varkappa,A_0)$  exist on dense sets in $\mathcal{H}_{\rm ac}(A_0)$ and for all $\Phi^*P_K\binom{\widetilde{g}}{g}\in\widetilde{N}_{\rm e}^0$ one has 
\begin{align}
  \label{eq:formula-kappa-0w-}
 &W_-(A_\varkappa,A_0)\Phi^*P_K\binom{\widetilde{g}}{g}=\Phi^*P_K\left(\begin{matrix}-\bigl(I+\chi_\varkappa^-(S-I)\bigr)^{-1}\bigl(I+\chi_\varkappa^+(S^*-I)\bigr)g\\[0.3em] g\end{matrix}\right),
 \\[1.0em]
 \label{eq:formula-kappa-0w+}
  &W_+(A_\varkappa,A_0)\Phi^*P_K\binom{\widetilde{g}}{g}=\Phi^*P_K
\left(\begin{matrix}\widetilde{g}\\[0.3em]
-\bigl(I+\chi_\varkappa^+(S^*-I)\bigr)^{-1}
\bigl(I+\chi_\varkappa^-(S-I)\bigr)\widetilde{g}\end{matrix}\right),
\end{align}
The ranges of $W_\pm(A_0,A_\varkappa)$ and  $W_\pm(A_\varkappa,A_0)$ are dense in $\mathcal{H}_{\rm ac}(A_0)$ and $N_{\rm e}^\varkappa,$ respectively.\footnote{In the case when $A_\varkappa$ is self-adjoint, or, in general, the named wave operators are bounded, the claims of the theorem are equivalent (by the classical Banach-Steinhaus theorem) to the statement of the existence and completeness of the wave operators for the pair $A_0, A_\varkappa.$ Sufficient conditions of boundedness of these wave operators are contained in {\it e.g.} \cite[Section 4]{MR573902}, \cite {MR1252228} and references therein.}
\end{theorem}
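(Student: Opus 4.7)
The plan is to verify all four formulae on the dense sets of smooth vectors and then read off the existence of the strong limits in (\ref{wave_def}) from these identities. The formula (\ref{eq:formula-0-kappaw-}) for $W_-(A_0,A_\varkappa)$ is essentially the formal calculation displayed just before the theorem: given $\Phi^*P_K\binom{\widetilde{g}}{g}\in\widetilde{N}_{\rm e}^\varkappa$, I would pick $\widetilde{g}'$ so that $\Phi^*P_K\binom{\widetilde{g}'}{g}\in\widetilde{N}_{\rm e}^0$, which by (\ref{lyubimaya_formula}) at $\varkappa=0$ forces $(I+S)\widetilde{g}'+(I+S^*)g=0$, and then Proposition \ref{prop:pre-wave-op} identifies the resulting vector with $W_-(A_0,A_\varkappa)\Phi^*P_K\binom{\widetilde{g}}{g}$.

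The other three formulae are obtained by the same bookkeeping with different conserved components and time orientations. For (\ref{eq:formula-0-kappaw+}), I would preserve the first component $\widetilde g$ and solve $(I+S^*)g''=-(I+S)\widetilde g$, using the analogue of Proposition \ref{prop:pre-wave-op} for $t\to+\infty$ obtained by time reversal. For (\ref{eq:formula-kappa-0w-}) and (\ref{eq:formula-kappa-0w+}), the starting vector lies in $\widetilde{N}_{\rm e}^0$ and the condition (\ref{lyubimaya_formula}) at nonzero $\varkappa$ is imposed on the target; regrouping via $\chi_\varkappa^++\chi_\varkappa^-=I$ collapses the resulting two operators to $I+\chi_\varkappa^\mp(S-I)$ and $I+\chi_\varkappa^\pm(S^*-I)$, matching the statement. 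Density of the ranges then follows because, as the conserved component sweeps its $L^2(\reals,\mathcal K)$-range, these formulae parametrise an $\mathfrak H$-dense subset of the smooth vectors of the target operator. The compactness of $S-I$ in $\overline{\mathbb C}_+$ is what makes all of these algebraic manipulations legitimate: it renders each of $I+S$, $I+S^*$, $I+\chi_\varkappa^\pm(S^*-I)$, and $I+\chi_\varkappa^\mp(S-I)$ a compact perturbation of (a multiple of) the identity, hence Fredholm of index zero, and combined with injectivity on the boundary away from a measure-zero exceptional set (a consequence of the regular-point hypothesis together with the model/dilation picture) this furnishes the required inverses.

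The principal obstacle, in my view, is promoting the pointwise vanishing in Proposition \ref{prop:pre-wave-op} and its three analogues (time reversal plus swapping the roles of $A_\varkappa$ and $A_0$) to the genuine strong-limit existence demanded by (\ref{wave_def}). In the non-selfadjoint setting the wave operators are not \emph{a priori} bounded, so uniform-boundedness arguments are unavailable; one has to exploit the compactness of $S-I$ to control the $\mathfrak H$-norm of the relevant differences uniformly in $t$ on the smooth-vector subset and to exclude accumulation of singularities of the inverse operators on the real axis, before passing to closures to obtain the wave operators defined on the dense sets of $N_{\rm e}^\varkappa$ and $\mathcal H_{\rm ac}(A_0)$ claimed in the theorem.
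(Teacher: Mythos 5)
Your formal derivation of the four formulae coincides with the paper's, but the analytic core is handled differently and, as written, has gaps. The paper does not obtain the boundary inverses $(I+S(k))^{-1}$ by a Fredholm argument: it computes the inverse explicitly in the open upper half-plane, $(I+S(z))^{-1}=\tfrac12\bigl(I+i\alpha M(z)^{-1}\alpha/2\bigr)$, observes that this exists throughout $\mathbb{C}_+$ by the analytic properties of the Weyl function, and then invokes Naboko's theorem \cite{MR1252228} --- this is where the compactness of $S-I$ actually enters --- to conclude that $(I+S(z))^{-1}$ is nontangentially bounded a.e.\ on $\mathbb{R}$ and has strong nontangential limits coinciding a.e.\ with $(I+S(k))^{-1}$. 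Your route ($I+S(k)$ is Fredholm of index zero plus ``injectivity on the boundary away from a measure-zero exceptional set'') leaves that injectivity unproven: $S(k)$ is merely a contraction a.e., so $-1$ may a priori be an eigenvalue, and appealing to ``the regular-point hypothesis together with the model/dilation picture'' is not an argument. Moreover, pointwise invertibility a.e.\ is not enough: one must show that $-(I+S)^{-1}(I+S^*)g$ and its three analogues define elements of $\mathfrak H$ for a \emph{dense} set of smooth vectors. The paper does this by truncation: with $\mathbbm{1}_n$ the indicator of $\{k:\ \|(I+S(k))^{-1}\|\le n\}$, the vectors $P_K\mathbbm{1}_n\binom{\widetilde g}{g}$ are again smooth, the corresponding right-hand sides lie in $\mathfrak H$, and dominated convergence gives density in $N_{\rm e}^\varkappa$. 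This step is absent from your proposal; your density-of-ranges remark also differs from the paper, which deduces it from the density of the domains by a standard inversion argument rather than by ``sweeping'' the conserved component.

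Your ``principal obstacle'' is also misplaced. No uniformity in $t$ and no closure argument are needed to obtain the strong limits in (\ref{wave_def}) on the advertised dense sets: once the companion vector $\Phi^*P_K\binom{\widetilde g'}{g}\in\widetilde N_{\rm e}^0$ is known to exist, Proposition \ref{prop:pre-wave-op} together with the unitarity of $e^{iA_0t}$ already yields that $e^{iA_0t}e^{-iA_\varkappa t}\Phi^*P_K\binom{\widetilde g}{g}$ converges to it, and the theorem claims existence on dense sets only, not boundedness of the wave operators. The entire difficulty is therefore concentrated in the well-posedness, on a dense set, of the right-hand sides of (\ref{eq:formula-0-kappaw-})--(\ref{eq:formula-kappa-0w+}) --- precisely the step your proposal leaves unsubstantiated.
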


\begin{proof}[Sketch of the proof]
In order to rigorously justify the above formal derivation of \eqref{eq:formula-0-kappaw-}--\eqref{eq:formula-kappa-0w+}, {\it i.e.} in order to prove the existence and completeness of the wave operators, one needs to show that the right-hand sides of the formulae \eqref{eq:formula-0-kappaw-}--\eqref{eq:formula-kappa-0w+} make sense on dense subsets of the corresponding absolutely continuous subspaces.
Noting that  \eqref{eq:formula-0-kappaw-}--\eqref{eq:formula-kappa-0w+} have the form identical to the expressions for wave operators derived in \cite[Section 4]{MR573902}, \cite {MR1252228}, this justification is an appropriate modification of the argument of \cite {MR1252228}.

Indeed, consider the formula (\ref{eq:formula-0-kappaw-}). Here one needs to attribute a correct sense to the expression $(I+S(k))^{-1}$ a.e. on the real line. Let $S(z)-I$ satisfy the assumption of the theorem. Then, using the strategy of \cite{MR1252228}, one has non-tangential boundedness of $(I+S(z))^{-1}$ for almost all points of the real line.
On the other hand, the latter inverse can be computed in $\mathbb C_+$:
\begin{equation*}
\bigl(I+S(z)\bigr)^{-1}=\frac{1}{2}\bigl(I+ i \alpha M(z)^{-1}\alpha/2\bigr).
\end{equation*}
It follows from the analytic properties of $M(z)$ that the inverse $(I+S(z))^{-1}$ exists everywhere in the upper half-plane. Thus, \cite{MR1252228} yields that $(I+S(z))^{-1}$ is $\mathbb R$-a.e. nontangentially bounded and it admits measurable non-tangential limits in the strong operator topology almost everywhere on $\mathbb R$. As it is easily seen, these limits must then coincide with $(I+ S(k))^{-1}$ for almost all $k\in \mathbb R$.

The presented argument allows one to verify the correctness of the formula (\ref{eq:formula-0-kappaw-}).
Indeed, consider $\mathbbm{1}_n(k),$ the indicator of the set $\{k\in \mathbb R: \|(I+S(k))^{-1}\|\leq n\}.
$
Clearly, $\mathbbm{1}_n(k)\to 1$ as $n\to \infty$ for almost all $k\in \mathbb R$. Next, suppose that $P_K(\widetilde g, g)\in \widetilde N_{\rm e}^\varkappa$. Then $P_K\mathbbm{1}_n(\widetilde g, g)$ is also a smooth vector and
$$
\binom{-(I+S)^{-1}\mathbbm{1}_n (I+S^*) g}{\mathbbm{1}_n g}\in \mathfrak H.
$$
It follows, by the Lebesgue dominated convergence theorem, that the set of vectors $P_K\mathbbm{1}_n(\widetilde g, g)$ is dense in $N_{\rm e}^\varkappa$. 

The remaining three wave operators are treated in a similar way, see the complete details in \cite{ChKS_OTAA}. Finally, the density of the range of the four wave operators follows from the density of their domains, by a standard inversion argument, see {\it e.g.} \cite{MR1180965}.
\end{proof}

\begin{remark}[\cite{ChKS_OTAA}]
\label{long_remark}

1. The condition of the above theorem that $S(z)-I$
is compact 
in $\overline{\mathbb C}_+$  is satisfied \cite{Krein, ChKS_OTAA}, as long as the scalar function $\|\alpha M(z)^{-1}\alpha\|_{\mathfrak S_p}$ is nontangentially bounded almost everywhere on the real line for some $p<\infty,$ where ${\mathfrak S_p},$
$p\in(0, \infty],$ are the standard Schatten -- von Neumann classes of compact operators.

2. An alternative sufficient condition is the condition $\alpha\in\mathfrak S_2$ (and therefore $B_\varkappa \in \mathfrak S_1$), or, more generally, $\alpha M(z)^{-1}\alpha\in\mathfrak S_1,$ see \cite{MR1036844} for details.

3. Following from the analysis above, the existence and completeness of the wave operators for the par $A_\varkappa,$ $A_0$ is closely linked to the condition of $\alpha$ having a ``relative Hilbert-Schmidt property'' with respect to $M(z).$
Recalling that $B_\varkappa=\alpha\varkappa\alpha/2,$ this is not always feasible to expect. Nevertheless, by appropriately modifying the boundary triple, the situation can often be rectified. For example, if  $C_\varkappa=C_0+\alpha\varkappa\alpha/2,$ where $C_0$ and  $\varkappa$ are bounded and $\alpha\in{\mathfrak S}_2,$ replaces the operator $B_\varkappa$ in (\ref{eq:b-kappa-def}), then one ``shifts" the boundary triple:
$\widehat{\Gamma}_0=\Gamma_0,$ $\widehat{\Gamma}_1=\Gamma_1-C_0\Gamma_0.$ One thus obtains that in the new triple $({\mathcal K}, \widehat{\Gamma}_0, \widehat{\Gamma}_1)$ the operator $A_{\varkappa}$  coincides with the extension  corresponding to the boundary operator $B_\varkappa=\alpha\varkappa\alpha/2,$ whereas the Weyl-Titchmarsh function $M(z)$ undergoes a shift to the function $M(z)-C_0.$ The proof of Theorem 6.1 remains intact, while Part 2 of this remark yields that the condition $\alpha (M(z)-C_0)^{-1}\alpha\in{\mathfrak S}_1$ guarantees the existence and completeness of the wave operators for the pair $A_{C_0},$ $A_{C_\varkappa}.$ The fact that the operator $A_0$ here is replaced by the operator $A_{C_0}$ reflects the standard argument that the complete scattering theory for a pair of operators requires that the operators forming this pair are ``close enough" to each other.

\end{remark}

The scattering operator $\Sigma$ for the pair $A_\varkappa,$
$A_0$
is defined by
\begin{equation*}
\Sigma=W_{+}^{-1}(A_\varkappa,A_0) W_{-}(A_\varkappa,A_0).
\end{equation*}
The formulae (\ref{eq:formula-0-kappaw-})--(\ref{eq:formula-kappa-0w+}) lead (see ({\it cf.} \cite{MR573902}))
 to the following formula for the action of $\Sigma$ in the model representation:
\begin{equation}
\Phi\Sigma\Phi^*P_K \binom{\widetilde g}{g}=
P_K
\left(\begin{matrix}-(I+\chi_\varkappa^-(S-I))^{-1}(I+\chi_\varkappa^+(S^*-I))g\\[0.7em]
(I+S^*)^{-1}(I+S)(I+\chi_\varkappa^-(S-I))^{-1}(I+\chi_\varkappa^+(S^*-I))g\end{matrix}
\right),
\label{last_formula}
\end{equation}
whenever $\Phi^*P_K \binom{\widetilde g}{g}\in\widetilde N_{\rm e}^0$. This representation holds on a dense linear set in
$\widetilde N_{\rm e}^0$ within the conditions of Theorem \ref{thm:existence-completeness-wave-operators}, which guarantees that
all the objects on the right-hand side of the formula (\ref{last_formula}) are correctly defined.

\section{Spectral representation for the absolutely continuous part of $A_0$}
\label{sec:spectral-repr-ac}

The identity
$$
\biggl\|P_K\binom{\widetilde g}{g}\biggr\|^2_{\mathfrak H}=\bigl\langle(I-S^*S)\widetilde g, \widetilde g \bigr\rangle\qquad\forall P_K \binom{\widetilde g}{g}\in\widetilde N_{\rm e}^0,
$$
which is derived in the same way as in \cite[Section 7]{MR573902} for all
$P_K \binom{\widetilde g}{g}\in\widetilde N_{\rm e}^0,$ which is equivalent to the
condition $(\widetilde g+S^*g)+(S\widetilde g+g)=0,$ see (\ref{lyubimaya_formula}),
allows us to consider the
isometry
$F: \Phi\widetilde N_{\rm e}^0\mapsto L^2({\mathcal K}; I-S^*S)$
defined by
\begin{equation}
FP_K \binom{\widetilde g}{g}=\widetilde g.
\label{F_def}
\end{equation}
Here $L^2({\mathcal K}; I-S^*S)$ is the Hilbert space of ${\mathcal K}$-valued functions on
$\mathbb R$ square summable with the matrix ``weight'' $I-S^*S,$ {\it cf.} (\ref{mathfrakH}). Similarly, the formula
$$
F_*P_K \binom{\widetilde g}{g}= g
$$
defines an isometry $F_*$ from $\Phi\widetilde N_{\rm e}^0$ to $L^2({\mathcal K}; I-SS^*).$

\begin{proposition}[\cite{ChKS_OTAA}]
Suppose that the assumptions of Theorem \ref{thm:existence-completeness-wave-operators} hold. Then the ranges of the operators $F$ and $F_*$ are dense in the spaces $L^2({\mathcal K}; I-S^*S)$ and $L^2({\mathcal K}; I-SS^*),$ respectively.
\end{proposition}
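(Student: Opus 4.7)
The plan is to invert explicitly the constraint that characterises smooth vectors in $\widetilde N_{\rm e}^0$. Recall from (\ref{lyubimaya_formula}) that, for $\varkappa=0$ (so $\chi_0^\pm=I/2$), one has $P_K\binom{\widetilde g}{g}\in\Phi\widetilde N_{\rm e}^0$ precisely when $\binom{\widetilde g}{g}\in\mathfrak{H}$ and
\begin{equation*}
(I+S)\widetilde g+(I+S^*)g=0\quad\text{a.e.\ on }\reals.
\end{equation*}
Consequently, the image $F(\Phi\widetilde N_{\rm e}^0)$ coincides with the set of those $\widetilde g$ for which the partner $g:=-(I+S^*)^{-1}(I+S)\widetilde g$ is well defined and pairs with $\widetilde g$ into an element of $\mathfrak{H}$; symmetrically for $F_*$, with the roles of $\widetilde g$ and $g$ interchanged through $(I+S)^{-1}$.

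First I would invoke the argument from the sketch of Theorem \ref{thm:existence-completeness-wave-operators}: under the hypothesis that $S-I$ is compact in $\overline{\complex}_+$, both $(I+S(z))^{-1}$ and $(I+S^*(z))^{-1}=((I+S(z))^{-1})^*$ admit measurable non-tangential boundary values in the strong topology a.e.\ on $\reals$ and are non-tangentially bounded there. Given any $h\in L^2({\mathcal K};I-S^*S)$, I would then construct approximants by a triple cutoff: for $n,m,N\in\nats,$ set
\begin{equation*}
h_{n,m,N}:=\mathbbm{1}_{\{k:\,\|(I+S^*(k))^{-1}\|\le n\}}\,\mathbbm{1}_{\{k:\,\|h(k)\|_{\mathcal K}\le m\}}\,\mathbbm{1}_{[-N,N]}\,h,\qquad g_{n,m,N}:=-(I+S^*)^{-1}(I+S)h_{n,m,N}.
\end{equation*}
Both $h_{n,m,N}$ and $g_{n,m,N}$ are bounded ${\mathcal K}$-valued measurable functions of compact support, hence lie in $L^2(\reals,{\mathcal K})$, and thus $\binom{h_{n,m,N}}{g_{n,m,N}}\in\mathfrak{H}$; the smoothness relation holds by construction, so $P_K\binom{h_{n,m,N}}{g_{n,m,N}}\in\Phi\widetilde N_{\rm e}^0$ with $F$-image equal to $h_{n,m,N}$.

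The limit $n,m,N\to\infty$ is then obtained via Lebesgue's dominated convergence theorem: each cutoff tends to $1$ a.e., the first by the almost-everywhere non-tangential boundedness of $(I+S^*)^{-1}$ and the other two trivially, and one has the pointwise domination $\langle(I-S^*S)h_{n,m,N},h_{n,m,N}\rangle\le\langle(I-S^*S)h,h\rangle$. Hence $\|h_{n,m,N}-h\|_{L^2({\mathcal K};I-S^*S)}\to 0$, which proves density of the range of $F$. The statement for $F_*$ follows from a verbatim symmetric argument in which $(I+S(k))^{-1}$ replaces $(I+S^*(k))^{-1}$ in the first indicator, and one sets $\widetilde g_{n,m,N}:=-(I+S)^{-1}(I+S^*)h_{n,m,N}$.

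The principal technical point I anticipate is to ensure that the formal pair $\binom{h_{n,m,N}}{g_{n,m,N}}$ defines a genuine, not merely symbolic, element of $\mathfrak{H}$; this is precisely what the triple cutoff secures by placing both components simultaneously in $L^2(\reals,{\mathcal K})$, where the inequality preceding (\ref{eq:D-spaces}) (or rather the remark that on $L^2\oplus L^2$ the weighted norm is controlled by $2(\|\widetilde g\|^2+\|g\|^2)$) makes the norm manifestly finite. Once this is in place, the identity $\|P_K\binom{\widetilde g}{g}\|^2_{\mathfrak H}=\langle(I-S^*S)\widetilde g,\widetilde g\rangle$ on $\widetilde N_{\rm e}^0$, already used in the definition (\ref{F_def}) of $F$, transports $L^2({\mathcal K};I-S^*S)$-convergence of the $h_{n,m,N}$ into convergence of their preimages in the model norm automatically.
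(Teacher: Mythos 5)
Your argument is correct and follows essentially the same route as the paper, which defers the detailed proof to \cite{ChKS_OTAA} but uses exactly this device --- inverting the smoothness constraint via $g=-(I+S^*)^{-1}(I+S)\widetilde g$, cutting off to the sets where $(I+S(k))^{-1}$ is uniformly bounded, and passing to the limit by dominated convergence --- in the sketch of the proof of Theorem \ref{thm:existence-completeness-wave-operators}. The only cosmetic point is that a general element of the completion $L^2({\mathcal K};I-S^*S)$ need not be a genuine ${\mathcal K}$-valued function, so one should first restrict to the dense subset of bounded, compactly supported functions before applying the pointwise cutoffs; with that remark in place your triple truncation and the domination $\langle(I-S^*S)h_{n,m,N},h_{n,m,N}\rangle\le\langle(I-S^*S)h,h\rangle$ close the argument.
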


The above statement immediately implies the next result, which allows us to obtain the required spectral representation.

\begin{theorem}
The operator $F,$ respectively $F_*,$ admits an extension to the
unitary mapping between $\Phi N_{\rm e}^0$ and
$L^2({\mathcal K}; I-S^*S),$ respectively $L^2({\mathcal K}; I-SS^*).$
\end{theorem}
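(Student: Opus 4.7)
The plan is to invoke the standard extension-by-continuity (BLT) argument. Recall that on the linear set $\Phi\widetilde{N}_{\rm e}^0,$ the operator $F$ is already defined by $FP_K\binom{\widetilde{g}}{g}=\widetilde{g},$ and the identity recalled just before \eqref{F_def},
$$
\biggl\|P_K\binom{\widetilde g}{g}\biggr\|^2_{\mathfrak H}=\bigl\langle(I-S^*S)\widetilde g,\widetilde g\bigr\rangle,
$$
which holds on $\widetilde{N}_{\rm e}^0$ via the constraint $(\widetilde g+S^*g)+(S\widetilde g+g)=0$ from \eqref{lyubimaya_formula}, asserts precisely that $F$ is an isometry from $\Phi\widetilde{N}_{\rm e}^0\subset\mathfrak{H}$ into $L^2(\mathcal{K};I-S^*S).$ The analogous isometry property of $F_*$ into $L^2(\mathcal{K};I-SS^*)$ follows by the symmetric identity $\|P_K\binom{\widetilde g}{g}\|^2_{\mathfrak H}=\langle(I-SS^*)g,g\rangle$ on $\widetilde{N}_{\rm e}^0.$

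Next I would observe that $\Phi\widetilde{N}_{\rm e}^0$ is, by construction, dense in $\Phi N_{\rm e}^0,$ since $N_{\rm e}^0=\clos(\widetilde{N}_{\rm e}^0)$ and $\Phi$ acts unitarily from $\mathcal H$ onto $K,$ hence is continuous and preserves closures. Consequently, $F$ is a densely defined isometry from $\Phi N_{\rm e}^0$ into the target Hilbert space $L^2(\mathcal{K};I-S^*S).$ By the bounded linear transformation theorem, $F$ extends by continuity in a unique way to an isometry, still denoted $F,$ defined on all of $\Phi N_{\rm e}^0;$ the same applies to $F_*.$

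To upgrade the extension from isometry to unitary, I would invoke the Proposition immediately preceding the statement: under the hypotheses of Theorem \ref{thm:existence-completeness-wave-operators}, the ranges of $F$ and $F_*$ (defined on $\Phi\widetilde{N}_{\rm e}^0$) are dense in $L^2(\mathcal{K};I-S^*S)$ and $L^2(\mathcal{K};I-SS^*),$ respectively. Since the range of any isometry is closed (images of Cauchy sequences are Cauchy), the range of the continuous extension contains the closure of the original range, which is the whole of the target space. Hence the extension is surjective, and an isometry with full range is unitary.

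There is no real obstacle here: the content of the theorem is entirely packaged in the preceding isometry identity and the density-of-range proposition, the statement we are proving being only the routine closure step. The only point demanding slight care is to verify that the two expressions ``$\widetilde g$'' and ``$g$'' passed through the extension procedure retain the interpretation as honest $L^2$-elements with respect to the respective matrix weights; this is automatic once one recognises that each of $F$ and $F_*$ is defined independently of the ambient space $\mathfrak{H}$ and depends only on its own intrinsic norm, so that the limits in the target spaces $L^2(\mathcal{K};I-S^*S)$ and $L^2(\mathcal{K};I-SS^*)$ exist by completeness, independently of any ambiguity in representing elements of $\mathfrak H$ by a pair $\binom{\widetilde g}{g}.$
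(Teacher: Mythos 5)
Your argument is correct and is exactly the one the paper intends: the paper gives no explicit proof, stating only that the result ``immediately'' follows from the preceding proposition, and the intended reasoning is precisely your chain of isometry identity, extension by continuity from the dense set $\Phi\widetilde{N}_{\rm e}^0$, and surjectivity via the density-of-range proposition. Nothing further is needed.
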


It follows that the operator $(A_0-z)^{-1}$ 
considered on $\widetilde N_{\rm e}^0$ acts as the multiplication by
$(k-z)^{-1},$ $k\in{\mathbb R},$
both in $L^2({\mathcal K}; I-S^*S)$ and $L^2({\mathcal K}; I-SS^*)$. In particular,
if one considers the absolutely continuous ``part'' of the
operator $A_0$, namely the operator $A_0^{({\rm e})}:=A_0|_{N_{\rm e}^0},$ then  $F\Phi A_0^{({\rm e})}\Phi^*F^*$ and
$F_*\Phi A_0^{({\rm e})}\Phi^*F_*^*
$
are the operators of multiplication by the independent variable in the
spaces $L^2({\mathcal K}; I-S^*S)$ and $L^2({\mathcal K}; I-SS^*)$, respectively.

In order to obtain a spectral representation from the above result, we need to diagonalise the weights in the definitions of the
above $L^2$-spaces. This diagonalisation is straightforward when
$\alpha=\sqrt{2}I.$ (This choice of $\alpha$ satisfies the conditions of Theorem \ref{thm:existence-completeness-wave-operators} {\it e.g.} when the boundary space $\mathcal K$ is finite-dimensional, which is the case we deal with in the application discussed in Sections \ref{sec:quantum-graphs}, \ref{sec:inverse-scattering}. The corresponding diagonalisation in the general setting will be treated elsewhere.)  In this particular case one has ({\it cf.} (\ref{SM_formula}))
\begin{equation}
S=(M-iI)(M+iI)^{-1},
\label{SviaM}
\end{equation}
and
consequently
\begin{align}
&I-S^*S=-2i (M^*-iI)^{-1}(M-M^*)(M+iI)^{-1},
\label{weightform}\\[0.3em]
&I-SS^*=2i (M+iI)^{-1}(M^*-M)(M^*-iI)^{-1}.\nonumber
\end{align}
Introducing the unitary transformations
\begin{align}
\label{unit_trans1}
&G: L^2({\mathcal K}; I-S^*S)\mapsto L^2\bigl({\mathcal K}; -2i(M-M^*)\bigr),\\[0.3em]
\label{unit_trans2}
&G_*: L^2({\mathcal K}; I-SS^*)\mapsto L^2\bigl({\mathcal K}; -2i(M-M^*)\bigr)
\end{align}
 by the
formulae $g\mapsto (M+iI)^{-1}g$ and
$g\mapsto (M^*-iI)^{-1} g$ respectively, one arrives at the fact that
$
GF\Phi A_0^{({\rm e})}\Phi^* F^*G^*$
and
$G_*F_*\Phi A_0^{({\rm e})}\Phi^*F_*^*G_*^*
$
are the operators of multiplication by the independent
variable in the space $L^2({\mathcal K};-2i(M-M^*))$. We show next that this amounts to the spectral representation in particular in the
case of (non-compact) quantum graphs.

\section{Quantum graphs and their scattering matrices}
\label{sec:quantum-graphs}


The result of the previous section 
only pertains to the absolutely
continuous part of the self-adjoint operator $A_0$, unlike {\it e.g.}
the passage to the classical von Neumann direct integral, under which
the whole of the self-adjoint operator gets mapped to the
multiplication operator in a weighted $L^2$-space (see {\it e.g.}
\cite[Chapter 7]{MR1192782}). Nevertheless, it proves useful in scattering theory, since it 
yields an explicit expression for the scattering matrix $\widehat{\Sigma}$ for the pair
$A_\varkappa,$ $A_0,$ which is the image of the scattering operator
$\Sigma$ in the spectral representation of the operator $A_0.$ Namely, we prove the following statement.

\begin{theorem}
The following formula holds:
\begin{equation}
\label{scat2}
\widehat{\Sigma}=GF\Sigma(GF)^*=
(M-\varkappa)^{-1}(M^*-\varkappa)(M^*)^{-1}M,
\end{equation}
where the right-hand side
represents the operator of multiplication by the corresponding function.
\end{theorem}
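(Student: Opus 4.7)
The plan is to take the explicit action of the scattering operator on smooth vectors, namely formula (\ref{last_formula}), and transport it through the composite isometry $GF$ into the spectral representation $L^2(\mathcal K;-2i(M-M^*))$. The computation reduces to algebraic identities involving the Cayley representation (\ref{SviaM}) of $S$.

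I begin by tracing $(GF)^*$ backwards from a dense subset of $L^2(\mathcal K;-2i(M-M^*))$ into $\widetilde N_{\rm e}^0$. Given such an $h$, since $G$ is the multiplication by $(M+iI)^{-1}$, the adjoint $G^*$ acts as multiplication by $M+iI$, and $F^*$ reconstructs $P_K\binom{\widetilde g}{g}$ from its first component $\widetilde g=(M+iI)h$, with $g$ determined by the defining condition of $\widetilde N_{\rm e}^0$ from (\ref{lyubimaya_formula}) at $\varkappa=0$, that is $(I+S)\widetilde g+(I+S^*)g=0$. The Cayley identities $I+S=2M(M+iI)^{-1}$ and $I+S^*=2(M^*-iI)^{-1}M^*$, obtained directly from (\ref{SviaM}), then yield
\begin{equation*}
g=-(M^*)^{-1}(M^*-iI)Mh.
\end{equation*}

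Next, I apply formula (\ref{last_formula}) and retain only the first component, since $F$ extracts the $\widetilde g$-slot. The key algebraic identities $S-I=-2i(M+iI)^{-1}$ and $S^*-I=2i(M^*-iI)^{-1}$ give
\begin{align*}
I+\chi_\varkappa^-(S-I)&=(M-\varkappa)(M+iI)^{-1},\\
I+\chi_\varkappa^+(S^*-I)&=(M^*-\varkappa)(M^*-iI)^{-1}.
\end{align*}
Hence the first component of $\Phi\Sigma\Phi^*P_K\binom{\widetilde g}{g}$ equals
\begin{equation*}
-(M+iI)(M-\varkappa)^{-1}(M^*-\varkappa)(M^*-iI)^{-1}g.
\end{equation*}
Substituting the expression for $g$ obtained above, and using that $M^*$ commutes with $M^*-iI$ so that $(M^*-iI)^{-1}(M^*)^{-1}(M^*-iI)=(M^*)^{-1}$, the expression collapses to $(M+iI)(M-\varkappa)^{-1}(M^*-\varkappa)(M^*)^{-1}Mh$. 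Applying $G$, which is multiplication by $(M+iI)^{-1}$, cancels the leading factor and produces $(M-\varkappa)^{-1}(M^*-\varkappa)(M^*)^{-1}Mh$, which is the claimed identity.

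The main obstacle is justifying that this formal computation, performed on the dense smooth-vector subset of $\widetilde N_{\rm e}^0$, extends by continuity to all of $L^2(\mathcal K;-2i(M-M^*))$. Under the hypotheses of Theorem~\ref{thm:existence-completeness-wave-operators}, the wave operators for the pair $A_\varkappa,A_0$ exist with dense ranges, so $\Sigma$ is defined (as a bounded, indeed isometric, operator) on $\mathcal H_{\rm ac}(A_0)$, and $GF$ is unitary between $\Phi N_{\rm e}^0$ and $L^2(\mathcal K;-2i(M-M^*))$; this already makes $\widehat\Sigma$ a bounded operator on the target space. It remains to verify that the operator-valued function $(M-\varkappa)^{-1}(M^*-\varkappa)(M^*)^{-1}M$ is essentially bounded on $\mathbb R$ in the strong operator topology, so that multiplication by it is a well-defined bounded operator on $L^2(\mathcal K;-2i(M-M^*))$; this follows from the non-tangential boundedness of $(I+\chi_\varkappa^\pm(S^{(*)}-I))^{-1}$ almost everywhere on $\mathbb R$, which was the key ingredient established in the proof of Theorem~\ref{thm:existence-completeness-wave-operators}.
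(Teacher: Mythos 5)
Your computation is correct and follows essentially the same route as the paper: both start from formula (\ref{last_formula}), eliminate one component via the constraint (\ref{lyubimaya_formula}) at $\varkappa=0$, and use the Cayley identities from (\ref{SviaM}) to rewrite everything in terms of $M$, conjugating by $F$ and $G$ to land on $(M-\varkappa)^{-1}(M^*-\varkappa)(M^*)^{-1}M$. The only difference is presentational — you conjugate by $GF$ directly on a vector pulled back from the target space, whereas the paper verifies the identity through weighted inner products — and your closing remarks on boundedness match the justification the paper delegates to the discussion surrounding Theorem~\ref{thm:existence-completeness-wave-operators}.
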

\begin{proof}

Using the definition (\ref{F_def}) of the isometry $F$ along with the relationship (\ref{lyubimaya_formula}) between $\widetilde{g}$ and $g$ whenever
$P_K\binom{\widetilde{g}}{g}\in\Phi \widetilde{N}_{\rm e}^\varkappa$ with $\varkappa=0,$
we obtain from (\ref{last_formula}):
\begin{equation}
\label{scat1}
F\Sigma F^*=
\bigl(I+\chi_\varkappa^-(S-I)\bigr)^{-1}\bigl(I+\chi_\varkappa^+(S^*-I)\bigr)(I+S^*)^{-1}(I+S),
\end{equation}
where the right-hand side
represents the operator of multiplication by the corresponding function.

Furthermore, substituting the expression (\ref{S_definition}) for $S$ in terms of $M$ implies that $F\Sigma F^*$ is the operator of multiplication by
\[
(M+iI)(M-\varkappa)^{-1}(M^*-\varkappa)(M^*)^{-1}M(M+iI)
\]
in the space $L^2({\mathcal K}; I-S^*S).$ Using (\ref{weightform}), we now obtain the following identity for all $f, g\in L^2({\mathcal K}; I-S^*S):$
\[
\langle F\Sigma F^*f,g\rangle_{L^2({\mathcal K}; I-S^*S)}=\bigl\langle (I-S^*S)(M+iI)(M-\varkappa)^{-1}(M^*-\varkappa)(M^*)^{-1}M(M+iI)f, g\bigr\rangle
\]
\[
=\bigl\langle -2i (M^*-iI)^{-1}(M-M^*)(M+iI)^{-1}(M+iI)(M-\varkappa)^{-1}(M^*-\varkappa)(M^*)^{-1}M(M+iI)f, g\bigr\rangle
\]
\[
=\bigl\langle -2i(M-M^*)(M-\varkappa)^{-1}(M^*-\varkappa)(M^*)^{-1}M(M+iI)f, (M+iI)g\bigr\rangle,
\]
which is equivalent to (\ref{scat2}), in view of the definition of the operator $G.$
\end{proof}


In applications to quantum graphs it may turn out that the
operator weight $-2i(M-M^*)$ (see (\ref{unit_trans1}), (\ref{unit_trans2})) is degenerate:
more precisely, $M(s)-M(s)^*=2i\sqrt{s}P_{\rm e},$ $s\in{\mathbb R},$ where
$P_{\rm e}$ is the orthogonal projection onto the subspace of ${\mathcal K}$ corresponding
to the set of ``external'' vertices of the graph, {\it i.e.} those vertices
to which semi-infinite edges are attached. Next, we describe the notation pertaining to the quantum graph setting.

\begin{remark}
From this point on, for simplicity of presentation we consider the case of a finite non-compact quantum graph, when the deficiency indices are finite. However, our approach allows us to consider the general setting of infinite deficiency indices, which in  the quantum graph setting leads to an infinite graph. In particular, on could consider the case of an infinite compact part of the graph. 
\end{remark}

In what follows, we denote by ${\mathbb G}={\mathbb G}({\mathcal E},\sigma)$ a
finite metric graph, {\it i.e.} a collection of a finite non-empty set
${\mathcal E}$ of compact or semi-infinite intervals
$e_j=[x_{2j-1},x_{2j}]$ (for semi-infinite intervals we set
$x_{2j}=+\infty$), $j=1,2,\ldots, n,$ which we refer to as
\emph{edges}, and of a partition $\sigma$ of the set of endpoints
${\mathcal V}:=\{x_k: 1\le k\le 2n,\ x_k<+\infty\}$ into $N$ equivalence classes $V_m,$
$m=1,2,\ldots,N,$ which we call \emph{vertices}:
${\mathcal V}=\bigcup^N_{m=1} V_m.$ The degree, or valence, ${\rm deg}(V_m)$ of the vertex $V_m$ is defined as the number of elements in $V_m,$ {\it i.e.} ${\rm card}(V_m).$
Further, we partition the set ${\mathcal V}$ into the two non-overlapping
sets of \emph{internal} ${\mathcal V}^{({\rm i})}$ and \emph{external}
${\mathcal V}^{({\rm e})}$ vertices, where a vertex $V$ is classed as internal
if it is incident to no non-compact edge and external otherwise. Similarly, we partition the set of edges
${\mathcal E}={\mathcal E}^{({\rm i})}\cup {\mathcal E}^{({\rm e})}$, into the collection of
compact (${\mathcal E}^{({\rm i})}$) and non-compact (${\mathcal E}^{({\rm e})}$)
edges. We assume for simplicity that the number of non-compact edges
incident to any graph vertex is not greater than one.

For a finite metric graph ${\mathbb G},$ we consider the Hilbert spaces
$L^2({\mathbb G}):=\bigoplus_{j=1}^n L^2(e_j)$ and
$W^{2,2}({\mathbb G}):=\bigoplus_{j=1}^n W^{2,2}(e_j).$ (Notice that these
spaces do not feel the graph connectivity, as each of them is the same
for different graphs with the same number of edges of the same
lengths.) Further, for a function $f\in W^{2,2}({\mathbb G})$, we
define
the normal derivative at each vertex along each of the adjacent edges,
as follows:
\begin{equation}
\partial_n f(x_j):=\left\{ \begin{array}{ll} f'(x_j),&\mbox{ if } x_j \mbox{ is the left endpoint of the edge},\\[0.35em]
-f'(x_j),&\mbox{ if } x_j \mbox{ is the right endpoint of the
edge.}
\end{array}\right.
\label{co_der}
\end{equation}
In the case of semi-infinite edges we only apply this
definition at the left endpoint of the edge.

\begin{definition}
For $f\in W^{2,2}({\mathbb G})$
and ${a_m}\in{\mathbb C}$ (below referred to as the ``coupling
constant"), the condition of continuity of the function $f$ through
the vertex $V_m$ ({\it i.e.} $f(x_j)=f(x_k)$ if $x_j,x_k\in V_m$)
together with the condition
\[
\sum_{x_j \in V_m} \partial _n f(x_j)={a_m} f(V_m)
\]
is called the $\delta$-type matching at the vertex $V_m$.
\end{definition}

\begin{remark}
  Note that the $\delta$-type matching condition in a particular case
  when ${a_m}=0$ reduces to the standard Kirchhoff matching
  condition at the vertex $V_m$, see {\it e.g.} \cite{MR3013208}.
\end{remark}

\begin{definition}
\label{def6}
The quantum graph Laplacian $A_a,$ $a:=(a_1,...,a_N),$ on a graph ${\mathbb G}$ with
$\delta$-type
 matching conditions is the operator of minus second derivative $-d^2/dx^2$
 in the Hilbert space $L^2({\mathbb G})$ on the domain of functions that
 belong to the Sobolev space $W^{2,2}({\mathbb G})$ and satisfy the
 $\delta$-type matching conditions at every vertex $V_m$,
 $m=1,2,\dots,N.$ The Schr\"odinger operator on the same graph is
 defined likewise on the same domain in the case of summable edge
 potentials ({\it cf.} \cite{MR3484377}).
\end{definition}

If all coupling constants ${a_m}$, $m=1,\dots, N$, are real, it is
shown that the operator $A_a$ is a proper self-adjoint
extension (see (\ref{eq:extension-by-operator})) of a closed
symmetric operator $A$ in $L^2({\mathbb G})$
\cite{MR1459512, MR1671833}. Note that, without loss of generality,
each edge $e_j$ of the graph ${\mathbb G}$ can be considered to be an
interval $[0,l_j]$, where $l_j:=x_{2j}-x_{2j-1}$, $j=1,\dots, n$ is
the length of the corresponding edge. Throughout the present paper we
will therefore only consider this situation.

In \cite{MR3484377}, the following result is obtained for the case of
finite \emph{compact} metric graphs.

\begin{proposition}[\cite{MR3484377}]\label{Prop_M}
  Let ${\mathbb G}$ be a finite compact metric graph with $\delta$-type
  coupling at all vertices. There exists a closed densely defined
  symmetric operator $A$ and a boundary triple such that the
  operator $A_a$ is an almost solvable extension of $A$,
  for which the parametrising matrix $\varkappa$ (see (\ref{eq:extension-by-operator}))
  is given by
  $\varkappa=\mathrm{diag}\{a_1,\dots,a_N\}$, whereas the Weyl
  function is an $N\times N$ matrix with elements
\begin{equation}\label{Eq_Weyl_Func_Delta}
m_{jk}(z)=
\begin{cases}\scriptsize
  -\sqrt{z}\biggl(\sum\limits_{e_p\in E_k}\cot\sqrt{z} l_p- 2\sum\limits_{e_p\in L_k}\tan\dfrac{\sqrt{z}
    l_p}{2}\biggr),
  & j=k,\\
  \sqrt{z}\sum\limits_{e_p\in C_{jk}}\dfrac{1}{\sin\sqrt{z} l_p},
  & j\not=k;\  V_j, V_k\ \mbox{adjacent},\\
  0, & j\not=k;\ V_j, V_k\ \mbox{non-adjacent}.\\
     \end{cases}
\end{equation}
Here the branch of the square root is chosen so that $\Im\sqrt{z}\geq 0,$
$l_p$ is the length of the edge $e_p$,  $E_k$ is the set of non-loop graph edges
incident to the vertex $V_k$, $L_k$ is the set of loops at the
vertex $V_k,$ and $C_{jk}$ is the set of graph edges
connecting vertices $V_j$ and $V_k.$
\end{proposition}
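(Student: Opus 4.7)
The plan is to construct the minimal symmetric operator $A$ together with a boundary triple $(\mathcal{K},\Gamma_0,\Gamma_1)$ with $\mathcal{K}=\mathbb{C}^N$, verify its axioms, identify $A_a$ as the extension corresponding to $\varkappa=\mathrm{diag}(a_1,\dots,a_N)$, and finally compute the Weyl function by explicitly solving $-f''=zf$ on each edge with prescribed vertex values.

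First, I would declare $A^{*}$ to be $-d^2/dx^2$ acting on the domain of all $f\in W^{2,2}(\mathbb{G})$ that are continuous at every vertex (so that $f(V_m)$ is unambiguously defined for each $m$), and define
\[
\Gamma_0 f:=\bigl(f(V_1),\dots,f(V_N)\bigr)^{\!\top}\!,\qquad
\Gamma_1 f:=\Bigl(\textstyle\sum_{x_j\in V_1}\partial_n f(x_j),\dots,\sum_{x_j\in V_N}\partial_n f(x_j)\Bigr)^{\!\top}\!.
\]
Setting $A$ to be the restriction of $A^{*}$ to $\ker\Gamma_0\cap\ker\Gamma_1$, integration by parts on each edge, combined with continuity at vertices and the sign convention \eqref{co_der}, yields the Green identity \eqref{Green_formula}. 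Surjectivity of $f\mapsto(\Gamma_1 f,\Gamma_0 f)$ is a routine construction using smooth cut-offs near each endpoint that realise any pair of prescribed vertex values and normal-derivative sums. Once this boundary triple is in place, the condition $\Gamma_1 f=\varkappa\Gamma_0 f$ with $\varkappa=\mathrm{diag}(a_1,\dots,a_N)$ unfolds precisely into the $\delta$-type matching condition at every vertex, so $A_a=A_{B_\varkappa}$ is almost solvable.

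To compute $M(z)$, I would fix $\phi=(\phi_1,\dots,\phi_N)\in\mathbb{C}^N$ and construct the unique $f\in\ker(A^{*}-zI)$ with $\Gamma_0 f=\phi$ edge-by-edge. On a non-loop edge $e_p$ of length $l_p$ joining $V_j$ (left endpoint) to $V_k$ (right endpoint), the boundary-value problem for $-f''=zf$ with prescribed endpoint values has the explicit solution
\[
f(x)=\phi_j\,\frac{\sin\!\bigl(\sqrt{z}(l_p-x)\bigr)}{\sin(\sqrt{z}\,l_p)}+\phi_k\,\frac{\sin(\sqrt{z}\,x)}{\sin(\sqrt{z}\,l_p)},
\]
from which \eqref{co_der} gives the contributions
\[
\partial_n f(0)=-\sqrt{z}\cot(\sqrt{z}\,l_p)\,\phi_j+\frac{\sqrt{z}}{\sin(\sqrt{z}\,l_p)}\,\phi_k,\qquad
\partial_n f(l_p)=\frac{\sqrt{z}}{\sin(\sqrt{z}\,l_p)}\,\phi_j-\sqrt{z}\cot(\sqrt{z}\,l_p)\,\phi_k.
\]
For a loop $e_p\in L_k$ the two endpoints both carry the value $\phi_k$, and using $(1-\cos\theta)/\sin\theta=\tan(\theta/2)$ the sum of the two normal derivatives simplifies to $2\sqrt{z}\tan(\sqrt{z}\,l_p/2)\,\phi_k$. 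Assembling these edge-contributions at each vertex $V_k$ gives
\[
(\Gamma_1 f)_k=-\sqrt{z}\Bigl(\sum_{e_p\in E_k}\cot(\sqrt{z}\,l_p)\Bigr)\phi_k
+2\sqrt{z}\Bigl(\sum_{e_p\in L_k}\tan\tfrac{\sqrt{z}\,l_p}{2}\Bigr)\phi_k
+\sqrt{z}\sum_{j\ne k}\Bigl(\sum_{e_p\in C_{jk}}\tfrac{1}{\sin(\sqrt{z}\,l_p)}\Bigr)\phi_j,
\]
which is exactly the matrix action of \eqref{Eq_Weyl_Func_Delta}, and the defining property $M(z)\Gamma_0 f=\Gamma_1 f$ identifies the Weyl function.

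The main obstacle is bookkeeping rather than analysis: one must separate loops from ordinary incident edges (loops would be double-counted in the naive cotangent sum and must be regrouped into the $\tan(\sqrt{z}\,l_p/2)$ term), accommodate possibly several parallel edges between two vertices (captured by the set $C_{jk}$), and keep the signs straight through the convention \eqref{co_der}. Once the loop simplification via the half-angle identity is carried out and the parallel-edge sums over $C_{jk}$ are retained in off-diagonal entries, the formula \eqref{Eq_Weyl_Func_Delta} follows; the independence of $z$ on the choice of representative vertex for each endpoint is automatic from the analytic form of $f$ above.
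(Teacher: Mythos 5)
Your proposal is correct, and it is essentially the standard argument: the paper itself states this proposition as a quoted result from \cite{MR3484377} without reproducing a proof, and your construction of the boundary triple via vertex values and summed normal derivatives, followed by the explicit edge-by-edge solution of $-f''=zf$ and the half-angle regrouping of loop contributions, is exactly the computation underlying the cited result. The only point worth making explicit in a fully rigorous write-up is that the maximal operator restricted to vertex-continuous $W^{2,2}$ functions really is the adjoint of $A=A^*\vert_{\ker\Gamma_0\cap\ker\Gamma_1}$ (which follows from the Green identity together with surjectivity of $(\Gamma_0,\Gamma_1)$, or from a deficiency-index count giving $n_\pm=N$).
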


It is easily seen that the rationale of \cite{MR3484377} is applicable
to the situation of non-compact metric graphs.
Indeed, denote by ${\mathbb G}^{({\rm i})}$ the compact part of the graph
${\mathbb G}$, {\it i.e.} the graph ${\mathbb G}$ with all the non-compact edges
removed. Proposition \ref{Prop_M} yields an expression for the Weyl
function $M^{({\rm i})}$ pertaining to the graph ${\mathbb G}^{({\rm i})}$. A simple calculation
then implies the following representation for the $M$-matrix pertaining to the
original graph ${\mathbb G}.$
\begin{lemma}
The matrix functions $M,$ $M^{({\rm i})}$ described above are related by the formula
\begin{equation}
M(z)=M^{({\rm i})}(z) + i\sqrt{z} P_{\rm e},\quad\quad z\in{\mathbb C}_+,
\label{M_Mi}
\end{equation}
where $P_{\rm e}$ is the orthogonal projection in the boundary space
$\mathcal K$ onto the set of external vertices $V_{\mathbb G}^{({\rm e})}$, {\it
  i.e.} the matrix $P_{\rm e}$ such that $(P_{\rm e})_{ij}=1$ if $i=j,$ $V_i\in V_{\mathbb G}^{({\rm e})},$ and
$(P_{\rm e})_{ij}=0$ otherwise.
\end{lemma}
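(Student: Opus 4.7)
The plan is to obtain the formula by a direct computation from the definition of the Weyl function, Definition \ref{def:weyl-function}, which characterises $M(z)$ on $\ker(A^*-zI)$ via $M(z)\Gamma_0 f=\Gamma_1 f.$ The boundary triple for the $\delta$-type graph (as in Proposition \ref{Prop_M}, adapted to the non-compact setting) is such that $\Gamma_0 f$ collects the common vertex values $f(V_m),$ $m=1,\dots,N,$ while $\Gamma_1 f$ collects the sums $\sum_{x_j\in V_m}\partial_n f(x_j)$ of normal derivatives at each vertex (here the sum is now taken over all incident edges, including the semi-infinite ones, by definition \eqref{co_der}).

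Given $z\in\mathbb{C}_+$, I would take $f\in\ker(A^*-zI)$ and analyse its restriction to the compact part $\mathbb{G}^{(\mathrm{i})}$ and to each semi-infinite edge separately. On each semi-infinite edge $e_p\in\mathcal{E}^{(\mathrm{e})}$ attached to an external vertex $V_m\in\mathcal{V}^{(\mathrm{e})},$ parametrised as $[0,\infty)$ with the vertex corresponding to $0,$ the requirement $f\in L^2(e_p)$ together with $-f''=zf$ selects the solution $f(x)=f(V_m)\exp(\mathrm{i}\sqrt{z}\,x)$ (using the branch with $\Im\sqrt{z}>0$). Using \eqref{co_der}, the normal derivative contribution from this edge to $(\Gamma_1 f)_m$ is then $f'(0)=\mathrm{i}\sqrt{z}\,f(V_m).$

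Next I would split $(\Gamma_1 f)_m$ into the contribution from the compact edges incident to $V_m$ and the contribution from the (at most one, by our assumption) semi-infinite edge incident to $V_m.$ The restriction of $f$ to $\mathbb{G}^{(\mathrm{i})}$ belongs to $\ker((A^{(\mathrm{i})})^*-zI)$ and has the same vertex values, so $\Gamma_0^{(\mathrm{i})}f=\Gamma_0 f$ and the compact-edge part of $(\Gamma_1 f)_m$ is exactly $\bigl(M^{(\mathrm{i})}(z)\Gamma_0 f\bigr)_m.$ Adding the semi-infinite contribution $\mathrm{i}\sqrt{z}\,f(V_m)$ (which appears only when $V_m\in\mathcal{V}^{(\mathrm{e})}$) gives
\[
(\Gamma_1 f)_m=\bigl(M^{(\mathrm{i})}(z)\Gamma_0 f\bigr)_m+\mathrm{i}\sqrt{z}\,(P_{\mathrm{e}}\Gamma_0 f)_m.
\]
Since $f\in\ker(A^*-zI)$ was arbitrary and $\Gamma_0$ maps this kernel onto the whole boundary space $\mathcal{K},$ the identity $M(z)=M^{(\mathrm{i})}(z)+\mathrm{i}\sqrt{z}\,P_{\mathrm{e}}$ follows.

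The only mild subtlety is checking that the boundary triple from \cite{MR3484377} indeed extends to the non-compact graph with the same $\Gamma_0$ and an augmented $\Gamma_1$ that now includes the normal derivatives coming from the semi-infinite ends, and that the parametrising matrix $\varkappa=\mathrm{diag}\{a_1,\dots,a_N\}$ is unchanged. This is routine: the $\delta$-coupling condition at $V_m$ sums normal derivatives over \emph{all} incident edges, and semi-infinite edges do not carry extra boundary data at infinity because of the $L^2$ constraint. No genuine obstacle arises.
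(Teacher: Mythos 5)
Your proposal is correct and follows essentially the same route as the paper: both arguments rest on the additive decomposition of the Weyl function over the compact part and the semi-infinite edges, and both identify the contribution of each non-compact edge by solving $-f''=zf$ in $L^2(0,\infty)$ to get $f(x)=f(0)\exp({\rm i}\sqrt{z}\,x)$ and hence $\partial_n f(0)={\rm i}\sqrt{z}\,f(0)$. The only cosmetic difference is that the paper phrases the decomposition as a sum of per-edge Dirichlet-to-Neumann matrices $M_j$, whereas you split the boundary data of a single $f\in\ker(A^*-zI)$ directly; the content is the same.
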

\begin{proof}
Note first that Weyl function of the graph ${\mathbb G}$ for the triple described in Proposition \ref{Prop_M} coincides with the sum of the matrices $M_j(z),$ $j=1,2,\dots, n,$ that are obtained by the formulae
\[
\Gamma_1f=M_j(z)\Gamma_0f,\quad\quad f\in{\rm ker}(A^*-zI),\quad\quad f\equiv 0\ {\rm on}\ {\mathbb G}\setminus e_j.
\]
In order words, the matrix functions $M_j$ describe the Dirichlet-to-Neumann mappings for the data supported on each individual edge $e_j,$ $j=1,2,\dots,n,$ where $A$ is as in Proposition \ref{Prop_M}.

Furthermore, functions $f\in{\rm ker}(A^*-zI)$ that vanish on all edges of the graph ${\mathbb G}$ but one non-compact edge $e_\infty,$ satisfy
\begin{equation}
-f''(x)=zf(x),\quad x\in[0,+\infty), \quad\quad f\in W^{2,2}(0,+\infty),
\label{non_compact_problem}
\end{equation}
where we identify $e_\infty$ and the semi-infinite line $[0,+\infty),$ as well as $f$ and its restriction to $e_\infty.$ Next, all non-trivial solutions to (\ref{non_compact_problem}) have the form
\[
f(x)=f(0)\exp(i\sqrt{z}x),\ \ \ \ x\in[0,+\infty),\ \ \ \ \ \ \ f(0)\neq 0,
\]
for which the value of the co-derivative (\ref{co_der}) at $x=0$ is clearly given by $\partial_nf(0)=i\sqrt{z}f(0).$ Therefore, the corresponding (additive) contribution to
the $M$-matrix, see Definition \ref{def:weyl-function}, is given by the matrix all of whose elements except the diagonal element corresponding to the vertex from which $e_\infty$ emanates are zero, while the only non-zero element equals $(f(0))^{-1}\partial_nf(0)=i\sqrt{z}.$
Repeating this argument for all non-compact edges of ${\mathbb G}$ and using the additivity property for the $M$-matrix discussed above yields the claim.
\end{proof}

The formula  (\ref{M_Mi}) leads to
$M(s)-M^*(s)=2i\sqrt{s}P_{\rm e}$ a.e. $s\in{\mathbb R}$, and the
expression (\ref{scat2}) for $\widehat{\Sigma}$ leads to the classical
scattering matrix $\widehat{\Sigma}_{\rm e}(k)$ of the pair of operators $A_0$ (which is the Laplacian on the graph ${\mathbb G}$ with standard Kirchhoff matching at all the vertices) and $A_\varkappa,$ where $\varkappa=\varkappa=\mathrm{diag}\{a_1,\dots,a_N\}:$
\begin{equation}
\widehat{\Sigma}_{\rm e}(s)=
P_{\rm e} (M(s)-\varkappa)^{-1}(M(s)^*-
\varkappa)(M(s)^*)^{-1}M(s) P_{\rm e},\ \ \ \ s\in{\mathbb R},
\label{sigma_hat}
\end{equation}
which acts as the operator of multiplication in the space
$L^2(P_{\rm e}{\mathcal K}; 4\sqrt{s}ds)$.

\begin{remark}
In the more common approach to the construction of scattering matrices, based on comparing the asymptotic expansions of solutions to spectral equations, see {\it e.g.} \cite{{Faddeev_additional}}, one obtains 
$\widehat{\Sigma}_{\rm e}$ as the scattering matrix. Our approach yields an explicit factorisation of 
$\widehat{\Sigma}_{\rm e}$
into expressions involving the matrices $M$ and $\varkappa$ only, sandwiched between two projections. (Recall that  $M$ and $\varkappa$ contain the information about the geometry of the graph and the coupling constants, respectively.) From the same formula (\ref{sigma_hat}), it is obvious that without the factorisation the pieces of information pertaining to the geometry of the graph and the coupling constants at the vertices are present in the final answer in an entangled form. 

\end{remark}

\begin{remark}
The concrete choice of boundary triple in accordance with Proposition \ref{Prop_M} leads to the fact that the ``unperturbed'' operator $A_0$ is fixed as the Laplacian on the graph with Kirchhoff matching conditions at the vertices. On the other hand, in applications it may be more convenient to consider a formulation where the operator $A_0$ corresponds to some other matching conditions, which would motivate another choice of the triple. This is readily facilitated by the analysis carried out in the preceding sections, {\it cf.} Part 3 of Remark 
\ref{long_remark}.  In particular, we point out that the formula (\ref{scat1}) is written in a triple-independent way.



We reiterate that the analysis above pertains not only to the cases when the coupling constants are real, leading to self-adjoint operators $A_a,$ but also to the case of non-selfadjoint extensions, {\it cf.} Theorem \ref{thm:existence-completeness-wave-operators}.
 \end{remark}

In what follows we often drop the argument $s\in{\mathbb R}$ of the
Weyl function $M$ and the scattering matrices $\widehat{\Sigma},$
$\widehat{\Sigma}_{\rm e}.$ Since
\begin{equation}
(M-\varkappa)^{-1}(M^*-\varkappa)=
I+(M-\varkappa)^{-1}(M^*-M)=I-2i\sqrt{s}(M-\varkappa)^{-1}P_{\rm e}
\label{triple_star}
\end{equation}
and
$$
(M^*)^{-1}M=I+2i\sqrt{s}(M^*)^{-1}P_{\rm e},
$$
a factorisation of $\widehat{\Sigma}_{\rm e}$ into a product of
$\varkappa-$dependent and $\varkappa-$independent factors ({\it cf.}
(\ref{scat2})) still holds in this case in $P_{\rm e} {\mathcal K},$ namely
\begin{equation}
\widehat{\Sigma}_{\rm e}=
\bigl[P_{\rm e} (M-\varkappa)^{-1}(M^*-\varkappa)P_{\rm e}\bigr]\bigl[P_{\rm e}(M^*)^{-1}M P_{\rm e}\bigr].
\label{bigstar}
\end{equation}

\section{Inverse scattering problem for graphs with $\delta$-coupling}
\label{sec:inverse-scattering}
We will now exploit the above approach in the analysis of the inverse
scattering problem for Laplace operators on finite metric graphs,
whereby the scattering matrix $\widehat{\Sigma}_{\rm e}(s),$ defined by (\ref{bigstar}), is assumed
to be known for almost all positive ``energies'' $s\in{\mathbb R},$
along with the graph ${\mathbb G}$ itself. The data to be determined is the
set of coupling constants $\{{a_j}\}_{j=1}^N$. For simplicity, in what follows we treat the inverse problem for graphs with
real coupling constants, which corresponds to self-adjoint operators, leaving the non-selfadjont situation to be addressed elsewhere.

First, given $\widehat{\Sigma}_{\rm e}(s)$ for almost all $s>0$, we reconstruct the meromorphic matrix-function
$P_{\rm e}(M^{({\rm i})}(z)-\varkappa)^{-1}P_{\rm e}$ for all complex $z,$
excluding the poles. This is an explicit calculation based on the
second resolvent identity (see {\it e.g.} \cite[Thm.\,5.13]{MR566954}). Namely, almost everywhere on the positive
half-line one has
\begin{multline*}
(M-\varkappa)^{-1}=
(M^{({\rm i})}-\varkappa)^{-1}-(M-\varkappa)^{-1}(M-M^{({\rm i})})(M^{({\rm i})}-\varkappa)^{-1}
\\[0.4em]
=\bigl[I-(M-\varkappa)^{-1}(M-M^{({\rm i})})\bigr](M^{({\rm i})}-\varkappa)^{-1},
\end{multline*}
and hence
\begin{equation}
P_{\rm e}(M-\varkappa)^{-1}P_{\rm e} =
\bigl[P_{\rm e}-i\sqrt{s}P_{\rm e} (M-\varkappa)^{-1}P_{\rm e}\bigr]
P_{\rm e}(M^{({\rm i})}-\varkappa)^{-1}P_{\rm e}.
\label{triangle}
\end{equation}

  Further, the first factor on
the right-hand side of (\ref{triangle}) is invertible for almost all $s>0.$
Indeed, we note first that
$\widehat{\Sigma}_{\rm e}^\varkappa:=P_{\rm e}
(M(s)-\varkappa)^{-1}(M(s)^*-\varkappa)$ is unitary in
$P_{\rm e}\,{\mathcal K}$ for almost all $s>0,$ since
\begin{multline*}
(M-\varkappa)(M^*-\varkappa)^{-1} (M-M^*) (M-\varkappa)^{-1}(M^*-\varkappa)\\[0.3em]
=(M-\varkappa)(M^*-\varkappa)^{-1}\bigl[(M-\varkappa)-(M^*-\varkappa)\bigr](M-\varkappa)^{-1}(M^*-\varkappa)\\[0.3em]
=(M-\varkappa)-(M^*-\varkappa)=M-M^*
\end{multline*}
and $M-M^*=2i\sqrt{s}P_{\rm e}$. Now, since
$$
P_{\rm e}-i\sqrt{s} P_{\rm e} (M-\varkappa)^{-1}P_{\rm e}=\bigl(I+\widehat{\Sigma}_{\rm e}^\varkappa\bigr)/2
$$
it suffices to show that $-1$ is not an eigenvalue of
$\widehat{\Sigma}_{\rm e}^\varkappa(s)$ for almost all $s>0.$ Assume the
opposite, {\it i.e.} for some $s>0$ one has
$$
(M(s)^*-\varkappa)^{-1}u_s=-(M(s)-\varkappa)^{-1}u_s,\ \ \ \ \ \ u_s\in P_{\rm e}\,\mathcal K\setminus\{0\}.
$$
A straightforward calculation then yields
$$
(M(s)^*-\varkappa)^{-1}(M^{({\rm i})}(s)-\varkappa)(M(s)-\varkappa)^{-1}u_s=0,
$$
from where
$$
(M(s)-\varkappa)^{-1}u_s\in \mathrm{ker}\bigl(M^{({\rm i})}(s)-\varkappa\bigr).
$$
The latter kernel is non-trivial only at the points $s$ which belong
to the (discrete) spectrum of the Laplacian on the compact part
${\mathbb G}^{({\rm i})}$ of the graph ${\mathbb G}$. It follows that
$(M(s)-\varkappa)^{-1}u_s$ is zero for almost all $s>0,$ which is a
contradiction with $u_s\neq 0.$

Note that, for a given graph ${\mathbb G},$ the expression
$P_{\rm e}(M-\varkappa)^{-1}P_{\rm e}$ is found by combining
(\ref{triple_star}) and (\ref{bigstar}):
\begin{equation}
P_{\rm e}(M-\varkappa)^{-1}P_{\rm e}=\frac{1}{2i\sqrt{s}}\bigl(P_{\rm e}-\widehat{\Sigma}_{\rm e}[P_{\rm e}(M^*)^{-1}MP_{\rm e}]^{-1}\bigr),
\label{interm_form}
\end{equation}
where we treat both $[P_{\rm e}(M^*)^{-1}MP_{\rm e}]^{-1}$ and, as before, $\widehat{\Sigma}_{\rm e}$ as operators in $P_{\rm e}{\mathcal K}.$

It follows from (\ref{triangle}) and (\ref{interm_form}) that for given $M,$ $\widehat{\Sigma}_{\rm e}$ the expression $P_{\rm e}(M^{({\rm i})}-\varkappa)^{-1}P_{\rm e}$ is determined uniquely for almost all $s>0:$
\[
P_{\rm e}(M^{({\rm i})}-\varkappa)^{-1}P_{\rm e}
=\bigl[P_{\rm e}-i\sqrt{s}P_{\rm e} (M-\varkappa)^{-1}P_{\rm e}\bigr]^{-1} P_{\rm e}(M-\varkappa)^{-1}P_{\rm e}
\]
\[
=\frac{1}{i\sqrt{s}}\bigl(P_{\rm e}+\widehat{\Sigma}_{\rm e}[P_{\rm e}(M^*)^{-1}MP_{\rm e}]^{-1}\bigr)^{-1}\bigl(P_{\rm e}-\widehat{\Sigma}_{\rm e}[P_{\rm e}(M^*)^{-1}MP_{\rm e}]^{-1}\bigr)
\]
\begin{equation}
=\frac{1}{i\sqrt{s}}\biggl(2\bigl(P_{\rm e}+\widehat{\Sigma}_{\rm e}[P_{\rm e}(M^*)^{-1}MP_{\rm e}]^{-1}\bigr)^{-1}-I\biggr)P_{\rm e}.
\label{DtD}
\end{equation}
In particular, due to the property of analytic continuation, the expression $P_{\rm e}(M^{({\rm i})}-\varkappa)^{-1}P_{\rm e}$ is determined uniquely in the whole of $\mathbb C$ with the
exception of a countable set of poles, which coincides with the
set of eigenvalues of the self-adjoint Laplacian $A_\varkappa^{({\rm i})}$ on the
compact part ${\mathbb G}^{({\rm i})}$ of the graph ${\mathbb G}$ with matching
conditions at the graph vertices given by the matrix $\varkappa,$ {\it cf.} Proposition \ref{Prop_M}.

\begin{definition}
\label{definition7}
Given a partition ${\mathcal V}_1\cup{\mathcal V}_2$ of the set of graph vertices, for $z\in{\mathbb C}$ consider the linear set $U(z)$ of functions that satisfy the differential equation $-u_z''=zu_z$ on each edge, subject to the conditions of continuity at all vertices of the graph and the $\delta$-type matching conditions at the vertices in the set ${\mathcal V}_2.$ For each function $f\in U(z),$ consider the vectors
\[
\Gamma_1^{{\mathcal V}_1}u_z:=\Bigl\{\sum_{x_j \in V_m} \partial _n f(x_j)\Bigr\}_{V_m\in{\mathcal V}_1},\quad\quad\Gamma_0^{{\mathcal V}_1}u_z:=\bigl\{f(V_m)\bigr\}_{V_m\in{\mathcal V}_1}.
\]
The {\it Robin-to-Dirichlet map} of the set ${\mathcal V}_1$ maps the vector  $(\Gamma_1^{{\mathcal V}_1}-\varkappa^{{\mathcal V}_1}\Gamma_0^{{\mathcal V}_1})u_z$ to $\Gamma_0^{{\mathcal V}_1}u_z,$ where $\varkappa^{{\mathcal V}_1}:=\diag \{a_m:\ V_m\in{\mathcal V}_1\}$. (Note that the function $u_z\in U(z)$ is determined uniquely by $(\Gamma_1^{{\mathcal V}_1}-\varkappa^{{\mathcal V}_1}\Gamma_0^{{\mathcal V}_1})u_z$ for all $z\in\mathbb C$ except a countable set of real points accumulating to infinity).
\end{definition}

\begin{remark} The above definition is a natural generalisation of the corresponding definitions of Dirichlet-to-Neumann and Neumann-to-Dirichlet maps pertaining to the graph boundary, considered
  in {\it e.g.} \cite{MR3013208}, \cite{MR2600145}.
\end{remark}

We argue that the matrix $P_{\rm e}(M^{({\rm i})}-\varkappa)^{-1}P_{\rm e}$ is the Robin-to-Dirichlet map
for the set ${\mathcal V}^{({\rm e})}.$ Indeed, assuming $\phi:=\Gamma_1 u_z-\varkappa \Gamma_0 u_z$ and $\phi=P_{\rm e} \phi,$ where the latter condition ensures the correct $\delta$-type matching on the set $\mathcal{V}^{({\rm i})},$ one has
$P_{\rm e}\phi=(M^{({\rm i})}-\varkappa)\Gamma_0 u_z$ and hence $\Gamma_0 u_z=(M^{({\rm i})}-\varkappa)^{-1}P_{\rm e} \phi$. Applying $P_{\rm e}$ to the last identity yields the claim, in accordance with Definition \ref{definition7}.


We have thus proved the following theorem.
\begin{theorem}
\label{thm9.1}

  The Robin-to-Dirichlet map for the vertices ${\mathcal V}^{({\rm e})}$
  is determined uniquely by the scattering matrix
  $\widehat{\Sigma}_{\rm e}(s),$ $s\in{\mathbb R},$ via the formula (\ref{DtD}).

\end{theorem}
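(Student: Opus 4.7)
The plan is to recover $P_{\rm e}(M^{({\rm i})}(s)-\varkappa)^{-1}P_{\rm e}$ from $\widehat{\Sigma}_{\rm e}(s)$ by a two-step algebraic inversion, and then identify the resulting matrix with the Robin-to-Dirichlet map of Definition \ref{definition7}. All the ingredients have been assembled in the paragraphs preceding the statement; the task is to organise them as a consolidated argument.

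In the first step I would solve (\ref{bigstar}) together with (\ref{triple_star}) for $P_{\rm e}(M-\varkappa)^{-1}P_{\rm e}$, arriving at the intermediate formula (\ref{interm_form}). This solution requires the invertibility of $P_{\rm e}(M^*)^{-1}MP_{\rm e}$ on $P_{\rm e}{\mathcal K}$ for almost all $s>0$, which I would verify using (\ref{M_Mi}) together with the fact that the zero set of the determinant of the corresponding meromorphic matrix-valued function on ${\mathbb C}_+$ is at most countable, hence of Lebesgue measure zero on the real line.

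In the second step I would invoke the second resolvent identity, which in view of $M-M^{({\rm i})}=i\sqrt{s}P_{\rm e}$ takes the form (\ref{triangle}), and then invert the prefactor $P_{\rm e}-i\sqrt{s}P_{\rm e}(M-\varkappa)^{-1}P_{\rm e}=(I+\widehat{\Sigma}_{\rm e}^\varkappa)/2$. This invertibility is the main obstacle and is handled by first verifying that $\widehat{\Sigma}_{\rm e}^\varkappa=P_{\rm e}(M-\varkappa)^{-1}(M^*-\varkappa)$ is unitary on $P_{\rm e}{\mathcal K}$, via a direct computation exploiting $M-M^*=2i\sqrt{s}P_{\rm e}$, and then ruling out $-1$ as an eigenvalue almost everywhere: a hypothetical eigenvector $u_s\in P_{\rm e}{\mathcal K}\setminus\{0\}$ would force $(M(s)-\varkappa)^{-1}u_s\in\ker(M^{({\rm i})}(s)-\varkappa)$, and since this kernel is nontrivial only when $s$ lies in the discrete spectrum of the self-adjoint compact-part Laplacian $A_\varkappa^{({\rm i})}$ on ${\mathbb G}^{({\rm i})}$, hence a null set, the hypothesis fails a.e. Once the prefactor is inverted, substituting (\ref{interm_form}) into (\ref{triangle}) yields (\ref{DtD}) and thus determines $P_{\rm e}(M^{({\rm i})}-\varkappa)^{-1}P_{\rm e}$ uniquely from $\widehat{\Sigma}_{\rm e}$ and the known matrix $M.$

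Finally, to identify the resulting matrix with the Robin-to-Dirichlet map, I would take $u_z$ solving $-u_z''=zu_z$ on each edge of ${\mathbb G}$, continuous at all vertices and satisfying $\delta$-matching on ${\mathcal V}^{({\rm i})}$; setting $\phi:=\Gamma_1 u_z-\varkappa\Gamma_0 u_z$ with $\phi=P_{\rm e}\phi$ (which encodes the $\delta$-matching on ${\mathcal V}^{({\rm i})}$), the definition of $M^{({\rm i})}$ gives $P_{\rm e}\phi=(M^{({\rm i})}-\varkappa)\Gamma_0 u_z$ and hence $\Gamma_0 u_z=(M^{({\rm i})}-\varkappa)^{-1}P_{\rm e}\phi$. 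Projecting with $P_{\rm e}$ identifies $P_{\rm e}(M^{({\rm i})}-\varkappa)^{-1}P_{\rm e}$ as the map sending $(\Gamma_1^{{\mathcal V}^{({\rm e})}}-\varkappa^{{\mathcal V}^{({\rm e})}}\Gamma_0^{{\mathcal V}^{({\rm e})}})u_z$ to $\Gamma_0^{{\mathcal V}^{({\rm e})}}u_z$, which is precisely the Robin-to-Dirichlet map of Definition \ref{definition7}, completing the proof.
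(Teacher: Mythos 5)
Your proposal is correct and follows essentially the same route as the paper: the second resolvent identity giving (\ref{triangle}), the unitarity of $\widehat{\Sigma}_{\rm e}^\varkappa$ combined with the exclusion of the eigenvalue $-1$ almost everywhere to invert the prefactor, the intermediate formula (\ref{interm_form}) obtained from (\ref{triple_star}) and (\ref{bigstar}), and the identification of $P_{\rm e}(M^{({\rm i})}-\varkappa)^{-1}P_{\rm e}$ with the Robin-to-Dirichlet map via $\phi=\Gamma_1 u_z-\varkappa\Gamma_0 u_z$ with $\phi=P_{\rm e}\phi$. The only addition is your explicit justification of the invertibility of $P_{\rm e}(M^*)^{-1}MP_{\rm e}$ on $P_{\rm e}{\mathcal K}$, a point the paper leaves implicit.
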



The following definition, required for the formulation of the next theorem, is a generalisation of the procedure of graph contraction, well studied in the algebraic graph theory, see {\it e.g.} \cite{Tutte}.

\begin{definition}[Contraction procedure\footnote{One of the referees pointed out that this procedure is sometimes referred to a ``layer peeling". We have opted to keep the term ``contraction'' for it, in line with the terminology of the algebraic literature.} for graphs and associated quantum graph Laplacians]
\label{def8}
For a given graph ${\mathbb G}$ vertices $V$ and $W$ connected  by an edge $e$ are ``glued'' together to form a
new vertex $(VW)$
of the contracted graph $\widetilde{\mathbb G}$ while simultaneously the edge $e$ is removed,  whereas the rest of the graph remains
unchanged. We do allow the situation of multiple edges, when $V$ and
$W$ are connected in ${\mathbb G}$ by more than one edge, in which case all such edges but the edge $e$ become loops of their
respective lengths attached to the vertex $(VW)$. The corresponding quantum graph Laplacian $A_a$ defined on ${\mathbb G}$ is contracted to the
quantum graph Laplacian $\widetilde{A}_{\widetilde{a}}$ by the application of the following rule pertaining to the coupling constants:
a coupling constant
at any unaffected vertex remains the same, whereas the coupling
constant at the new vertex $(VW)$ is set to be the sum of the coupling constants at $V$ and $W.$
Here it is always assumed that all quantum graph Laplacians are described by Definition \ref{def6}.
\end{definition}

The matrix $\varkappa$ of the coupling constants is now determined as part of an iterative procedure based on the following
result.

\begin{theorem}
\label{NtD_NtD}
Suppose that the edge lengths of the graph ${\mathbb G}^{({\rm i})}$ are rationally independent.
The element\footnote{By renumbering if necessary, this does not lead to loss of generality.} $(1,1)$  of the Robin-to-Dirichlet map described above yields the element $(1,1)$ of the ``contracted'' graph
$\widetilde{\mathbb G}^{({\rm i})}$ obtained from the graph ${\mathbb G}^{({\rm i})}$ by removing a non-loop edge $e$
emanating from $V_1.$ The procedure of passing from the graph ${\mathbb G}^{({\rm i})}$ to the contracted graph
$\widetilde{\mathbb G}^{({\rm i})}$ is given in Definition \ref{def8}.

\end{theorem}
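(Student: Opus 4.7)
The plan is to derive an explicit algebraic relation connecting $R_{11}(z)$ and $\widetilde{R}_{11}(z)$ via Schur-complement manipulations of the $M$-matrix, and then to use the rational independence of the edge lengths to recover the auxiliary quantities that appear in this relation.

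First I would extract the coupling constant $a_1$ at $V_1$. By Proposition~\ref{Prop_M}, the diagonal entries of $M^{(\mathrm{i})}(z)-\varkappa$ dominate the off-diagonal ones exponentially as $\mathrm{Im}\sqrt{z}\to+\infty$, which yields
\[R_{11}(z)^{-1}=\mathrm{i}\sqrt{z}\,\deg(V_1)-a_1+o(1),\]
so that $a_1$ is uniquely determined from $R_{11}$.

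Next, let $V_2$ denote the other endpoint of $e$. Partitioning $M^{(\mathrm{i})}-\varkappa$ into blocks indexed by $\{V_1\}\cup\{V_2\}\cup\{V_3,\ldots,V_N\}$ and denoting by $B(z)$ the $2\times 2$ Schur complement of the last block on $\{V_1,V_2\}$, a direct calculation gives
\[R_{11}(z)^{-1}=B_{11}(z)-\frac{B_{12}(z)^2}{B_{22}(z)}.\]
For the contracted graph, the row and column of $\widetilde{M}^{(\mathrm{i})}-\widetilde{\varkappa}$ at the merged vertex $W$ are the sums of the $V_1$- and $V_2$-rows and columns of $M^{(\mathrm{i})}-\varkappa$ with the contributions of $e$ removed, and combining this with the identity $\cot\theta-\csc\theta=-\tan(\theta/2)$ one derives
\[\widetilde{R}_{11}(z)^{-1}=B_{11}(z)+B_{22}(z)+2B_{12}(z)-2\sqrt{z}\tan\!\bigl(\sqrt{z}\ell/2\bigr),\]
where $\ell$ is the length of $e$.

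The hard part will be reconstructing $B_{22}(z)$ and $B_{12}(z)$ from the single known function $R_{11}(z)$, and this is exactly where the rational independence hypothesis enters. The meromorphic function $R_{11}(z)^{-1}$ has several disjoint families of poles: one set located at $(n\pi/\ell_p)^2$ for the various edge lengths $\ell_p$ of $\mathbb{G}^{(\mathrm{i})}$, and others at eigenvalues of sub-operators associated with Dirichlet-type boundary conditions at vertices $V_1$ and $V_2$. Rational independence of edge lengths guarantees that these families are pairwise disjoint and that the residues at each family depend, through the explicit formulas of Proposition~\ref{Prop_M}, only on specific subsets of the unknowns. A systematic pole-and-residue analysis in the spirit of Ershova \emph{et al.}~\cite{MR3484377} then recovers $B_{22}(z)$ as a meromorphic function, hence also $B_{12}(z)$ via the first displayed identity, and substitution into the second identity delivers $\widetilde{R}_{11}(z)^{-1}$. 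This pole-disentangling step is the main technical obstacle and the key use of the rational independence assumption.
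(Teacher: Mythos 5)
Your two displayed identities are correct: writing $B(z)$ for the $2\times2$ Schur complement of the block indexed by $\{V_3,\dots,V_N\}$ in $M^{(\rm i)}-\varkappa$, one indeed has $R_{11}^{-1}=B_{11}-B_{12}^2/B_{22}$, and the merged row/column of $\widetilde M^{(\rm i)}-\widetilde\varkappa$ together with $-2\cot\theta+2/\sin\theta=2\tan(\theta/2)$ gives $\widetilde R_{11}^{-1}=B_{11}+B_{22}+2B_{12}-2\sqrt z\tan(\sqrt z\,\ell/2)$. These are essentially the Schur-complement form of the determinant identities the paper itself uses. The problem is the step you yourself flag as the ``hard part'': recovering $B_{12}$ and $B_{22}$ from the single known function $R_{11}$. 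This is a genuine gap, not a routine verification. You know one scalar function $R_{11}^{-1}=B_{11}-B_{12}^2/B_{22}$, while $B_{11},B_{12},B_{22}$ are three unknown functions of $z$ (each depending on the unknown coupling constants $a_2,\dots,a_N$ through $A_{RR}^{-1}$). The poles of $R_{11}^{-1}$ sit at the zeros of $B_{22}/\det B$ --- not, as you assert, at $(n\pi/\ell_p)^2$: those are singularities of the individual entries of $M^{(\rm i)}$ and they generically cancel in the ratio of determinants defining $R_{11}$. Even granting that the zeros of $B_{22}$ can be isolated, the residue of $R_{11}^{-1}$ at a simple zero $z_0$ of $B_{22}$ is $-B_{12}(z_0)^2/B_{22}'(z_0)$, which couples the two unknown functions; no argument is given for why this data determines $B_{22}$ and $B_{12}$ separately as meromorphic functions. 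As written, the reconstruction is asserted rather than proved.

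The paper's proof uses rational independence in a different and more decisive way, which you may want to adopt. Since $f_1=R_{11}$ is an explicit quasi-periodic function of the variables $\sqrt z\,l_1,\dots,\sqrt z\,l_n$, rational independence (via Kronecker's theorem) implies that the known function of $z$ alone determines $f_1$ as a function of the edge lengths treated as \emph{independent} variables, with the unknown $a_m$ as fixed parameters. One may therefore legitimately take $l_1\to0$ in the data. Writing $f_1={\mathcal D}^{(1)}/\det(M^{(\rm i)}-\varkappa)$, multiplying numerator and denominator by $-l_1$, and performing the row/column operation that adds $\cos(\sqrt z\,l_1)$ times the first row to the second (which removes the $1/\sin(\sqrt z\,l_1)$ singularity), one identifies $\lim_{l_1\to0}(-l_1)\det(M^{(\rm i)}-\varkappa)$ with $\det(\widetilde M^{(\rm i)}-\widetilde\varkappa)$ and likewise for the minor, so the limit of $f_1$ \emph{is} the $(1,1)$ entry for the contracted graph. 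This bypasses entirely the need to disentangle $B_{12}$ and $B_{22}$ from a single trace of the resolvent-type function.
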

\begin{proof}

Due to the assumption
that the edge lengths of the graph ${\mathbb G}^{({\rm i})}$ are rationally
independent, the element (1,1), which we denote by $f_1,$ is expressed explicitly as a function of
$\sqrt{z}$ \emph{and} all the edge lengths $l_j,$ $j=1,2,\dots, n,$ in particular, of the length of the edge $e,$ which we assume  to be $l_1$ without loss of generality. This is
an immediate consequence of the explicit form of the matrix
$M^{({\rm i})},$ see (\ref{Eq_Weyl_Func_Delta}). Again without loss of generality, we also assume that the edge $e$ connects the vertices $V_1$ and $V_2.$

Further, consider the expression $\lim_{l_1\to 0}
f_1(\sqrt{z}; l_1,\dots,l_n; a)$. On the one hand, this limit is known from the explicit expression for $f_1$ mentioned above. On the
other hand, $f_1$ is the ratio of the determinant ${\mathcal D}^{(1)}(\sqrt{z}; l_1, \dots, l_n; a)$ of the principal minor of the matrix $M^{({\rm i})}(z)-\varkappa$ obtained by removing its first row and and first column and the
determinant of $M^{({\rm i})}(z)-\varkappa$ itself:
\[
f_1(\sqrt{z}; l_1,\dots,l_n; a)=\frac{{\mathcal D}^{(1)}(\sqrt{z}; l_1, \dots, l_n; a)}{{\rm det}\bigl(M^{({\rm i})}(z)-\varkappa\bigr)}
\]
Next, we multiply by $-l_1$ both the numerator and denominator of this ratio, and pass to the limit in each of them separately:
\begin{equation}
\lim_{l_1\to0}f_1(\sqrt{z}; l_1,\dots,l_n; a)=\frac{\lim\limits_{l_1\to0}(-l_1){\mathcal D}^{(1)}(\sqrt{z}; l_1, \dots, l_n; a)}{\lim\limits_{l_1\to0}(-l_1){\rm det}\bigl(M^{({\rm i})}(z)-\varkappa\bigr)}
\label{bulky_ratio}
\end{equation}
The numerator of (\ref{bulky_ratio}) is easily computed as the
determinant ${\mathcal D}^{(2)}(z; l_1, \dots, l_n; a)$ of the minor of $M^{({\rm i})}(z)-\varkappa$ obtained by removing its first two rows and first two columns.

As for the denominator of (\ref{bulky_ratio}), we add to the second row of the matrix
$M^{({\rm i})}(z)-\varkappa$ its first row multiplied by $\cos(\sqrt{z}l_1),$ which leaves the determinant unchanged.  This operation, due to the identity
\[
-\cot(\sqrt{z}l_1)\cos(\sqrt{z}l_1)+\frac{1}{\sin(\sqrt{z}l_1)}=\sin(\sqrt{z}l_1),
\]
cancels out the singularity of all matrix elements of the second row at the point $l_1=0.$ We introduce the factor $-l_1$ ({\it cf.} \ref{bulky_ratio}) into the first row and pass to the limit as $l_1\to 0.$ Clearly, all rows but the first are regular at $l_1=0$ and hence converge to their limits as $l_1\to0.$ Finally, we add to the second column of the limit its first column, which again does not affect the determinant, and note that the first row of the resulting matrix has
one non-zero element, namely the $(1,1)$ entry. This procedure reduces the denominator in (\ref{bulky_ratio}) to the determinant of a matrix of the size reduced by one.  As in \cite{MR3430381}, it is checked that this determinant is nothing but ${\rm det}(\widetilde{M}^{({\rm i})}-\widetilde\varkappa)$, where $\widetilde M^{({\rm i})}$ and $\widetilde\varkappa$ are the Weyl matrix and the (diagonal) matrix of coupling constants pertaining to the contracted graph $\widetilde{\mathbb G}^{({\rm i})}.$ This immediately implies that the ratio obtained as a result of the above procedure coincides with the entry (1,1) of the matrix $(\widetilde M^{({\rm i})}-\widetilde\varkappa)^{-1},$ {\it i.e.}
\begin{equation}
\lim_{l_1\to0}f_1(\sqrt{z}; l_1,\dots,l_n; a)=f_1^{(1)}(\sqrt{z}; l_2,\dots,l_n; \widetilde{a}),
\label{imya}
\end{equation}
where $f_1^{(1)}$ is the element (1,1) of the Robin-to-Dirichlet map of the contracted graph $\widetilde{\mathbb G}^{({\rm i})},$ and $\widetilde{a}$ is given by Definition \ref{def8}.
\end{proof}

The main result of this section is the theorem below, which is a corollary of Theorems \ref{thm9.1} and \ref{NtD_NtD}. We assume without loss of generality that $V_1\in{\mathcal V}^{{\rm (e)}}$ and denote by $f_1(\sqrt{z})$ the (1,1)-entry of the Robin-to-Dirichlet map for the set ${\mathcal V}^{{\rm (e)}}.$ We set the following notation. Fix a spanning tree ${\mathbb T}$ (see {\it e.g.} \cite{Tutte}) of the graph ${\mathbb G}^{({\rm i})}.$ We let the vertex $V_1$ to be the root of $\mathbb T$ and assume, again without loss of generality, that the number of edges in the path $\gamma_m$ connecting $V_m$ and the root is a non-decreasing function of $m.$
Denote by $N^{(m)}$ the number of vertices in the path $\gamma_m,$
and by $\bigl\{l^{(m)}_k\bigr\},$ $k=1,\dots, N^{(m)}-1,$ the associated sequence of lengths of the edges in $\gamma_m,$ ordered along the path from the root $V_1$ to
$V_m.$ Note that each of the lengths $l^{(m)}_k$ is clearly one of the edge lengths $l_j$ of the compact part of the original graph ${\mathbb G}.$

\begin{theorem}
\label{last_theorem}
Assume that the graph ${\mathbb G}$ is connected and the lengths of its compact edges are rationally independent. Given the scattering matrix $\widehat{\Sigma}_{\rm e}(s),$ $s\in{\mathbb R},$ the Robin-to-Dirichlet map for the set ${\mathcal V}^{{\rm (e)}}$ and the matrix of coupling constants $\varkappa$ are determined constructively in a unique way. Namely,
the following formulae hold for $l=1, 2,\dots, N$ and determine $a_m,$ $m=1,\dots, N:$
\[
\sum_{m: V_m\in\gamma_l}a_m
=\lim_{\tau\to+\infty}\Biggl\{-\tau\Bigl(\sum_{V_m\in\gamma_l}{\rm deg} (V_m)-2(N^{(l)}-1)\Bigr)-\frac{1}{f_1^{(l)}(i\tau)}\Biggr\},
\]
where
\begin{equation}
f_1^{(l)}(\sqrt{z}):=\lim_{l^{(l)}_{N^{(l)}-1}\to0}\dots\lim_{l^{(l)}_2\to0}\lim_{l^{(l)}_1\to0}f_1(\sqrt{z}),
\label{last}
\end{equation}
where in the case $l=1$ no limits are taken in (\ref{last}).
\end{theorem}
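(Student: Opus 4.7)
The plan is to combine the two preceding theorems, together with an asymptotic analysis of the diagonal entry of the Robin--to--Dirichlet map of an appropriately contracted graph, and then to close the argument by an induction along the breadth-first ordering of the vertices.

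First, by Theorem \ref{thm9.1}, the scattering matrix $\widehat\Sigma_{\rm e}(s)$ uniquely determines the entire Robin--to--Dirichlet map $P_{\rm e}(M^{({\rm i})}(z)-\varkappa)^{-1}P_{\rm e}$ (a meromorphic function of $z\in\mathbb C$), and in particular its $(1,1)$-entry $f_1(\sqrt z)$. Next, iterating Theorem \ref{NtD_NtD} along the $N^{(l)}-1$ edges of the spanning-tree path $\gamma_l$ (in the order $l^{(l)}_1,l^{(l)}_2,\ldots,l^{(l)}_{N^{(l)}-1}$, which is legitimate thanks to the rational independence of the edge lengths, preserved at each step), the repeated limit in \eqref{last} produces $f_1^{(l)}(\sqrt z)$, the $(1,1)$-entry of the Robin--to--Dirichlet map of the graph $\widetilde{\mathbb G}^{({\rm i})}_l$ obtained from ${\mathbb G}^{({\rm i})}$ by successively contracting all edges of $\gamma_l$. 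By Definition \ref{def8}, this contracted graph has a distinguished ``merged'' vertex whose coupling constant is precisely $\sum_{V_m\in\gamma_l} a_m$, and whose degree (computed in $\widetilde{\mathbb G}^{({\rm i})}_l$) is $\sum_{V_m\in\gamma_l}{\rm deg}(V_m)-2(N^{(l)}-1)$, the $-2$ per contracted edge reflecting that every such edge had both endpoints inside $\gamma_l$ (possibly generating loops, which still contribute $2$ to the degree of the merged vertex).

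The heart of the proof is then the asymptotic analysis of $f_1^{(l)}(i\tau)$ as $\tau\to+\infty$. Setting $\sqrt z=i\tau$ in \eqref{Eq_Weyl_Func_Delta}, one uses $\cot(i\tau l)=-i\coth(\tau l)\to -i$, $\tan(i\tau l/2)=i\tanh(\tau l/2)\to i$, and $1/\sin(i\tau l)\to 0$ exponentially fast, to obtain that the Weyl matrix $\widetilde M^{({\rm i})}(i\tau)$ of the contracted graph has asymptotics
\[
\bigl(\widetilde M^{({\rm i})}(i\tau)\bigr)_{jj} = -\tau\,{\rm deg}(\widetilde V_j)+o(1),\qquad \bigl(\widetilde M^{({\rm i})}(i\tau)\bigr)_{jk}=O\bigl(e^{-c\tau}\bigr)\ \ (j\neq k),
\]
for some $c>0$ depending on the minimal edge length. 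Consequently $\widetilde M^{({\rm i})}(i\tau)-\widetilde\varkappa$ is a diagonally dominant matrix with diagonal $-\tau\,{\rm deg}(\widetilde V_j)-\widetilde a_j+o(1)$, and its inverse has $(1,1)$-entry asymptotic to $\bigl(-\tau\,{\rm deg}(\widetilde V_1)-\widetilde a_1\bigr)^{-1}+o(\tau^{-1})$. Inserting the explicit values of ${\rm deg}(\widetilde V_1)$ and $\widetilde a_1$ identified above, one concludes
\[
-\frac{1}{f_1^{(l)}(i\tau)}=\tau\Bigl(\sum_{V_m\in\gamma_l}{\rm deg}(V_m)-2(N^{(l)}-1)\Bigr)+\sum_{V_m\in\gamma_l}a_m+o(1),
\]
which upon rearrangement and taking $\tau\to+\infty$ is exactly the claimed formula.

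Finally, the coupling constants themselves are recovered inductively. The BFS enumeration of the vertices, together with the fact that each path $\gamma_l$ is contained within the vertex set indexed by $1,\ldots,l$ (the strictly earlier vertices already lie on $\gamma_l$ since the tree distance is monotone in $l$), allows us to solve the triangular system: for $l=1$ the formula returns $a_1$ directly, and having determined $a_1,\ldots,a_{l-1}$ the $l$-th equation yields $a_l$. Uniqueness is thus automatic. The main technical obstacle, as I see it, lies in step two, namely in rigorously controlling the iterated limits in \eqref{last}: at each stage one has to verify that the hypothesis of rational independence for the remaining edge lengths is preserved (or more precisely, that the asymptotic information needed is still legitimately extractable), and that Theorem \ref{NtD_NtD}, proved for one contraction, may be chained; however, since each contraction only removes one edge from the edge-length set and the degree bookkeeping is additive, the iteration is consistent.
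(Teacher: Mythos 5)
Your proposal is correct and follows essentially the same route as the paper: Theorem \ref{thm9.1} to recover the Robin-to-Dirichlet map, iterated application of the contraction result of Theorem \ref{NtD_NtD} along each spanning-tree path $\gamma_l$, the large-$\tau$ asymptotics of $1/f_1^{(l)}(i\tau)$ to extract the summed coupling constant at the merged vertex, and the triangular structure induced by the breadth-first ordering to solve for the individual $a_m$. The only slight imprecision is your intermediate bound $o(\tau^{-1})$ on the $(1,1)$-entry of the inverse, which as stated is too weak to yield the final $o(1)$ after multiplying through by $O(\tau)$; the exponential smallness of the off-diagonal and diagonal corrections that you yourself invoke (and which the paper records as $o(\tau^{-K})$ for all $K>0$) is what actually closes this step.
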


\begin{proof}


We first apply Theorem \ref{thm9.1} to determine the Robin-to-Dirichlet map for the vertices ${\mathcal V}^{({\rm e})}.$ Next,
 we notice that the knowledge of the (1,1)-element $f_1$ of the Robin-to-Dirichlet map for the set ${\mathcal V}^{({\rm e})},$ {\it i.e.} of the matrix $P_{\rm e}(M^{({\rm i})}-\varkappa)^{-1}P_{\rm e},$ together with the asymptotic expansion for
  $M^{({\rm i})}(z)$ as $\sqrt{z}\to + i\infty,$ yields the element (1,1) of
  the matrix $\varkappa$, which is the coupling constant ${a_1}$ at
  the vertex $V_1,$ see Proposition \ref{Prop_M}. Indeed, setting
  $\sqrt{z}=i\tau,$ $\tau\to +\infty,$ one has ({\it cf.} (\ref{Eq_Weyl_Func_Delta}))
\begin{equation}
\frac 1 {f_1}= i\tau \biggl(-\sum_{e_p\in E_1} \cot(i\tau l_p)+2
\sum_{e_p\in L_1}\tan\frac{i\tau l_p}{2}\biggr)-{a_1} + o\bigl(\tau^{-K}\bigr)
\label{alpha1_recovery_0}
\end{equation}
\begin{equation}
=-\tau\,{\rm deg}(V_1)
-{a_1} + o\bigl(\tau^{-K}\bigr)
, \ \ \ \ \tau\to+\infty
\label{alpha1_recovery}
\end{equation}
for all $K>0$,
where the first sum in (\ref{alpha1_recovery_0}) is taken over all non-loop edges
$e_p$ of ${\mathbb G}^{({\rm i})}$ emanating from the vertex $V_1$ and the second
over all loops $e_p$ attached to $V_1.$
The coupling constant ${a_1}$
is then recovered directly from (\ref{alpha1_recovery}).

In order to determine the coupling constant $a_2,$ we apply
Theorem \ref{NtD_NtD}. In order to do so we note that the
the vertex $V_2$ is connected to $V_1$ by the edge of the length $l_1^{(2)}$
and apply the contraction procedure along this edge. In particular, the formula (\ref{imya}), together with asymptotics (\ref{alpha1_recovery}) re-written for the first diagonal element of the contracted graph, yields the coupling constant pertaining to the vertex $\widetilde{V}_1:=(V_1V_2)$ of the contracted graph, which, by Theorem \ref{NtD_NtD}, is equal to $a_1+a_2:$
\[
a_1+a_2=\lim_{\tau\to+\infty}\Biggl\{i\tau \biggl(-\sum_{e_p\in\widetilde{E}_1} \cot(i\tau l_p)+2
\sum_{e_p\in\widetilde{L}_1}\tan\frac{i\tau l_p}{2}\biggr)-\frac{1}{f_1^{(1)}}\Biggr\}
\]
\begin{equation}
=\lim_{\tau\to+\infty}\Biggl\{-\tau \bigl({\rm deg} (V_1)+{\rm deg}(V_2)-2\bigr)
-\frac{1}{f_1^{(1)}}\Biggr\},
\label{imya2}
\end{equation}
where $\widetilde{E}_1$ is the set of all non-loop edges of the contracted graph $\widetilde{\mathbb G}^{({\rm i})}$ emanating from the vertex $\widetilde{V}_1,$ $\widetilde{L}_1$ is the set of loops attached to this same vertex, and $f_1^{(1)},$ explicitly given by (\ref{imya}), is the element (1,1) of the Robin-to-Dirichlet map of the contracted graph. Thus we recover the value of the coupling constant ${a_2},$ as a result of consequent evaluations of indeterminate forms of two different types: ``$0/0$" (see (\ref{imya})) and
``$\infty-\infty$" (see (\ref{imya2})).

Since the graph ${\mathbb G}$ is connected, the above procedure is iterated until the only remaining vertex of the contracted graph is $V_1,$ at which point the last coupling constant $a_N$ is determined. The claim of the theorem follows.
\end{proof}

\begin{remark}
1. Notice that each step of the above iterative process generates a set of loops, which is treated according to the formula (\ref{alpha1_recovery_0}). Alternatively, these loops can be discarded by an elementary recalculation of the corresponding element of the Robin-to-Dirichlet map in the application of Theorem \ref{NtD_NtD}.

2. From the proof of Theorem \ref{last_theorem} it actually follows that the inverse problem of determining matching conditions based on the Robin-to-Dirichlet map pertaining to any subset of graph vertices for any finite and compact graph ${\mathbb G}$ has a unique and constructive solution. As in the theorem, the graph is assumed connected and its edge lengths rationally independent.  More than that, for the solution of the named inverse problem it suffices to know any one diagonal element of the Robin-to-Dirichlet map.

\end{remark}

\section*{Acknowledgements}


KDC is grateful for the financial support of
the Engineering and Physical Sciences Research Council: Grant EP/L018802/2 ``Mathematical foundations of metamaterials: homogenisation, dissipation and operator theory''. AVK has been partially supported by a grant of the Ukrainian Ministry for Education and by the RFBR grant 16-01-00443-a. LOS has been partially supported by UNAM-DGAPA-PAPIIT IN105414 and SEP-CONACYT 254062.

We also thank the reviewers for a number of useful suggestions, which have helped us improve the manuscript.

\def\cprime{$'$} \def\lfhook#1{\setbox0=\hbox{#1}{\ooalign{\hidewidth
  \lower1.5ex\hbox{'}\hidewidth\crcr\unhbox0}}}

\end{document}